\newtheorem{thm}{Theorem}[section]
\newtheorem{cor}[thm]{Corollary}
\newtheorem{lem}[thm]{Lemma}
\newtheorem{prop}[thm]{Proposition}
\theoremstyle{definition}
\newtheorem{defn}[thm]{Definition}
\theoremstyle{example}
\newtheorem{exam}[thm]{Example}
\theoremstyle{remark}
\numberwithin{equation}{section}
\begin{document}

\title[Cross-Gram Matrix associated to two sequences in Hilbert spaces]
{Cross-Gram Matrix associated to two sequences in Hilbert spaces}

\author{E. Osgooei, A. Rahimi}
\address{Department of sciences\\
Urmia University of Technology\\ Urmia, Iran.}
\email{osgooei@yahoo.com; e.osgooei@uut.ac.ir}
\address{Department of Mathematics\\ University of Maragheh\\ Maragheh,
Iran.} \email{rahimi@maragheh.ac.ir; asgharrahimi@yahoo.com}

\dedicatory{}

\subjclass[2010]{42C15, 42C40.}

\keywords{Frames; Riesz bases; Dual frames; Cross-Gram matrix.}%

%\date{}%
%\dedicatory{}%
%\commby{}%
% ----------------------------------------------------------------
\begin{abstract}
The conditions for sequences $\{f_{k}\}_{k=1}^{\infty}$ and
$\{g_{k}\}_{k=1}^{\infty}$ being Bessel sequences, frames or Riesz
bases, can be expressed in terms of the so-called cross-Gram matrix.
In this paper we investigate the cross-Gram operator, $G$,
associated to the sequence $\{\langle f_{k}, g_{j}\rangle\}_{j,
k=1}^{\infty}$ and sufficient and necessary conditions for
boundedness, invertibility, compactness and positivity of this
operator are determined depending on the associated sequences. We
show that invertibility of $G$ is not possible when the associated
sequences are frames but not Riesz Bases or at most one of them is
Riesz basis. In the special case we prove that $G$ is a positive
operator when $\{g_{k}\}_{k=1}^{\infty}$ is the canonical dual of
$\{f_{k}\}_{k=1}^{\infty}$.
\end{abstract}

\maketitle
% ----------------------------------------------------------------
\section{INTRODUCTION}
The fundamental operators in frame theory are the synthesis,
analysis and frame operators associated with a given frame. The
ability of combining these operators to make a sensitive operator is
indeed essential in frame theory and its applications.
Time-invariant filters', i. e. convolution operators, are used
frequently in applications. These operators can be called Fourier
multipliers \cite{9, 16}. In the last decade Gabor filters which are
beneficial tools to perform time-variant filters have very strong
applications in psychoacoustics \cite{8}, computational auditory
scene analysis \cite{30}, and seismic data analysis \cite{22}. For
more information on these operators we refer to \cite{12, 10, 11}.
\\In this paper for given two Bessel sequences $\{f_{k}\}_{k=1}^{\infty}$ and
$\{g_{k}\}_{k=1}^{\infty}$, the synthesis operator of the sequence
$\{f_{k}\}_{k=1}^{\infty}$ with the analysis operator of the
sequence $\{g_{k}\}_{k=1}^{\infty}$ is composed and a fundamental
operator is generated. This operator is called the cross-Gram
operator associated with the sequence $\{\langle f_{k},
g_{j}\rangle\}_{j, k=1}^{\infty}$ \cite{B, p}. This paper concerns
this question that when can sequences $\{f_{k}\}_{k=1}^{\infty}$ and
$\{g_{k}\}_{k=1}^{\infty}$ in a Hilbert space $H$, generate a
cross-Gram operator with the properties of boundedness,
invertibility and positivity. Vise versa if the cross Gram-operator
has the above properties what can be expected of the sequences
$\{f_{k}\}_{k=1}^{\infty}$ and $\{g_{k}\}_{k=1}^{\infty}$.
\\Let $H$ be a complex Hilbert space. A frame for $H$ is a sequence
$\{f_{k}\}_{k=1}^{\infty}\subset H $ such that there are positive
constants $A$ and $B$ satisfying
\begin{equation}\label{9999}
A\|f\|^{2}\leq\sum_{k=1}^{\infty}|\langle f, f_{k}\rangle|^{2}\leq
B\|f\|^{2}, \ \ f\in H.
\end{equation}
The constants $A$ and $B$ are called lower and upper frame bounds,
respectively. We call $\{f_{k}\}_{k=1}^{\infty} $ a Bessel sequence
with bound $B$, if we have only the second inequality in
(\ref{9999}). Associated with each Bessel sequence
$\{f_{k}\}_{k=1}^{\infty}$ we have three linear and bounded
operators, the synthesis operator:
$$T:\ell^{2}(\mathbb{N})\rightarrow H,\ \
T(\{c_{k}\}_{k=1}^{\infty})=\sum_{k=1}^{\infty}c_{k}f_{k},$$ the
analysis operator which is defined by:
$$T^{*}:H\rightarrow\ell^{2}(\mathbb{N});\ \ T^{*}f=\{\langle f,
f_{k}\rangle\}_{k=1}^{\infty},$$ and the frame operator:
$$S:H\rightarrow H;\ \ Sf=TT^{*}f=\sum_{k=1}^{\infty}\langle f, f_{k}\rangle
f_{k}.$$ If $\{f_{k}\}_{k=1}^{\infty}$ is a Bessel sequence, we can
compose the synthesis operator $T$ and its adjoint $T^{*}$ to obtain
the bounded operator
$$T^{*}T:\ell^{2}(\mathbb{N})\rightarrow\ell^{2}(\mathbb{N});\ \
T^{*}T\{c_{k}\}_{k=1}^{\infty}=\{\langle\sum_{\ell=1}^{\infty}c_{\ell}f_{\ell},
f_{k}\rangle\}_{k=1}^{\infty}.$$
%If $\{e_{k}\}_{k=1}^{\infty}$ is
%the canonical orthonormal basis for $\ell^{2}(\mathbb{N})$, the
%jk-th entry in the matrix representation for $T^{*}T$ is
%$$\langle T^{*}Te_{k}, e_{j}\rangle=\langle Te_{k},
%Te_{j}\rangle=\langle f_{k}, f_{j}\rangle.$$
Therefore the matrix
representation of $T^{*}T$ is as follows:
$$T^{*}T=\{\langle f_{k}, f_{j}\rangle\}_{j, k=1}^{\infty}.$$
The matrix $\{\langle f_{k}, f_{j}\rangle\}_{j, k=1}^{\infty}$ is
called the matrix associated with $\{f_{k}\}_{k=1}^{\infty}$ or Gram
matrix and it defines a bounded operator on $\ell^{2}(\mathbb{N})$
when $\{f_{k}\}_{k=1}^{\infty}$ is a Bessel sequence.
\\In order to recognize that a sequence $\{f_{k}\}_{k=1}^{\infty}$ is a Bessel
sequence or frame we need to check (\ref{9999}) for all $f\in H$.
But in practice this is not always so easy. The following two
results give us a practical method to diagnose Bessel sequences or
frames by the concept of Gram matrix or in other words just by
calculating $\{\langle f_{k}, f_{j}\rangle\}_{j, k=1}^{\infty}$.
\begin{lem}\label{1111}\cite{7}
Suppose that $\{f_{k}\}_{k=1}^{\infty}\subseteq H$. Then the
following statements are equivalent:
\\(1) $\{f_{k}\}_{k=1}^{\infty}$ is a Bessel sequence with bound $B$.
\\(2) The Gram matrix associated with $\{f_{k}\}_{k=1}^{\infty}$ defines a bounded
operator on $\ell^{2}(\mathbb{N})$ with norm at most $B$.
\end{lem}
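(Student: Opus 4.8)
The plan is to use the factorization $G=T^{*}T$ of the Gram matrix through the synthesis operator, together with the identity $\langle Gc,c\rangle_{\ell^{2}}=\norm{\sum_{k}c_{k}f_{k}}^{2}$, which holds for every finitely supported $c=\{c_{k}\}\in\ell^{2}(\mathbb{N})$ and follows by expanding the inner product. This single identity is what links the operator-norm bound on $G$ to the Bessel bound $B$, so I would organize both implications around it.

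For $(1)\Rightarrow(2)$ I would argue as follows. If $\{f_{k}\}$ is Bessel with bound $B$, then $\norm{T^{*}f}^{2}=\sum_{k}|\langle f,f_{k}\rangle|^{2}\le B\norm{f}^{2}$ for all $f\in H$, so $T^{*}$, and hence $T$, is bounded with $\norm{T}^{2}=\norm{T^{*}}^{2}\le B$. As recalled in the Introduction, the matrix of $T^{*}T$ relative to the canonical basis of $\ell^{2}(\mathbb{N})$ is exactly $\{\langle f_{k},f_{j}\rangle\}_{j,k}$, so this matrix defines a bounded operator of norm $\norm{T^{*}T}=\norm{T}^{2}\le B$.

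For the converse I would start from a finitely supported $c=\{c_{k}\}$, for which $\sum_{k}c_{k}f_{k}$ is a genuine finite sum in $H$, and compute
$$\norm{\sum_{k}c_{k}f_{k}}^{2}=\sum_{j,k}c_{k}\overline{c_{j}}\langle f_{k},f_{j}\rangle=\langle Gc,c\rangle\le\norm{G}\,\norm{c}^{2}\le B\norm{c}^{2}.$$
Thus the linear map $c\mapsto\sum_{k}c_{k}f_{k}$, defined on the dense subspace of finitely supported sequences, is bounded with norm at most $\sqrt{B}$, and therefore extends uniquely to a bounded operator $T:\ell^{2}(\mathbb{N})\to H$ with $\norm{T}\le\sqrt{B}$. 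Taking adjoints and testing on the basis vectors $e_{k}$ gives $\langle T^{*}f,e_{k}\rangle=\langle f,Te_{k}\rangle=\langle f,f_{k}\rangle$, so $T^{*}f=\{\langle f,f_{k}\rangle\}_{k}$, and hence $\sum_{k}|\langle f,f_{k}\rangle|^{2}=\norm{T^{*}f}^{2}\le B\norm{f}^{2}$; that is, $\{f_{k}\}$ is Bessel with bound $B$.

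I expect the only genuine subtlety to lie in the converse: a priori the synthesis operator is defined only on finitely supported sequences (the hypothesis does not yet guarantee that $\sum_{k}c_{k}f_{k}$ converges for arbitrary $c\in\ell^{2}(\mathbb{N})$), and the boundedness of $G$ is used precisely to supply the uniform estimate needed for the density extension. Once the extension is in place, the identification of $T^{*}$ with the analysis operator is the routine Hilbert-space computation above.
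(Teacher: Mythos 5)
Your proof is correct and is essentially the standard argument from the cited reference [7] (the paper itself states this lemma without proof): factor the Gram operator as $T^{*}T$, and for the converse use the identity $\langle Gc,c\rangle=\norm{\sum_{k}c_{k}f_{k}}^{2}$ on finitely supported sequences to bound the synthesis operator and extend by density. You correctly isolate the one genuine subtlety, namely that the synthesis operator is only a priori defined on finite sequences, so nothing further is needed.
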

\begin{defn}
A Riesz basis $\{f_{k}\}_{k=1}^{\infty}$ for $H$ is a family of the
form $\{Ue_{k}\}_{k=1}^{\infty}$, where $\{e_{k}\}_{k=1}^{\infty}$
is an orthonormal basis for $H$ and $U:H\rightarrow H$ is a bounded
bijective operator.
\end{defn}
\begin{prop}\label{ch}\cite{7}
A sequence $\{f_{k}\}_{k=1}^{\infty}$ is a Riesz basis for $H$ if
and only if it is an unconditional basis for $H$ and
$$0<\inf\|f_{k}\|\leq\sup\|f_{k}\|<\infty.$$
\end{prop}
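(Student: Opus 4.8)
The plan is to prove the two implications separately; the direction ``Riesz basis $\Rightarrow$ unconditional basis with bounded norms'' is essentially formal, and all the substance lies in the converse. For the easy direction, write $f_{k}=Ue_{k}$ with $\{e_{k}\}_{k=1}^{\infty}$ an orthonormal basis and $U$ bounded and bijective. By the bounded inverse theorem $U^{-1}$ is bounded, so $U$ is a topological isomorphism, and since isomorphisms carry (unconditional) bases to (unconditional) bases, $\{f_{k}\}_{k=1}^{\infty}$ is an unconditional basis. The norm bounds are immediate from $\|f_{k}\|=\|Ue_{k}\|\le\|U\|$ and $1=\|e_{k}\|=\|U^{-1}f_{k}\|\le\|U^{-1}\|\,\|f_{k}\|$, giving $0<\|U^{-1}\|^{-1}\le\inf_{k}\|f_{k}\|\le\sup_{k}\|f_{k}\|\le\|U\|<\infty$.

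For the converse I would first normalize: put $u_{k}=f_{k}/\|f_{k}\|$, which is again an unconditional basis (rescaling basis vectors by nonzero constants preserves being an unconditional basis). If one can show this normalized unconditional basis is a Riesz basis, say $u_{k}=Ve_{k}$ with $V$ bounded and bijective, then $f_{k}=\|f_{k}\|\,Ve_{k}=(VD)e_{k}$, where $D$ is the diagonal operator $De_{k}=\|f_{k}\|e_{k}$; since $0<\inf_{k}\|f_{k}\|\le\sup_{k}\|f_{k}\|<\infty$, the operator $D$ is bounded and bijective, hence so is $VD$, and $\{f_{k}\}_{k=1}^{\infty}$ is a Riesz basis. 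So the problem reduces to the normalized case, where it suffices to establish a two-sided estimate $A\sum|c_{k}|^{2}\le\|\sum c_{k}u_{k}\|^{2}\le B\sum|c_{k}|^{2}$ for every finitely supported scalar sequence $\{c_{k}\}$: defining $V$ on an orthonormal basis by $Ve_{k}=u_{k}$, the upper inequality makes $V$ bounded, the lower inequality makes it bounded below (hence injective with closed range), and completeness of $\{u_{k}\}_{k=1}^{\infty}$ forces its range to be dense, so $V$ is bijective.

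The core step is thus the two-sided estimate. By unconditionality and the uniform boundedness principle there is a constant $K$ such that $\|\sum a_{k}c_{k}u_{k}\|\le K\|\sum c_{k}u_{k}\|$ whenever $\sup_{k}|a_{k}|\le1$; taking the $a_{k}$ to be signs $\varepsilon_{k}=\pm1$ and using the Hilbert-space identity that the average of $\|\sum\varepsilon_{k}v_{k}\|^{2}$ over all sign choices equals $\sum\|v_{k}\|^{2}$, one gets $\sum|c_{k}|^{2}=\sum|c_{k}|^{2}\|u_{k}\|^{2}=\mathrm{avg}_{\varepsilon}\,\|\sum\varepsilon_{k}c_{k}u_{k}\|^{2}\le K^{2}\|\sum c_{k}u_{k}\|^{2}$, which is the lower bound. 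For the upper bound I would pass to the coefficient functionals: since $H$ is reflexive, $\{u_{k}^{*}\}_{k=1}^{\infty}$ is itself an unconditional basis (of $H\cong H^{*}$) whose biorthogonal system is $\{u_{k}\}_{k=1}^{\infty}$, and $1=|\langle u_{k},u_{k}^{*}\rangle|\le\|u_{k}^{*}\|$; the same averaging argument applied to $\{u_{k}^{*}\}_{k=1}^{\infty}$ yields $\sum|\langle y,u_{k}\rangle|^{2}\le (K')^{2}\|y\|^{2}$ for every $y\in H$, and then by duality $\|\sum c_{k}u_{k}\|=\sup_{\|y\|=1}\bigl|\sum c_{k}\langle u_{k},y\rangle\bigr|\le K'\bigl(\sum|c_{k}|^{2}\bigr)^{1/2}$, the upper bound.

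I expect the main obstacle to be exactly this use of sign-averaging together with reflexivity: it is the only place where the Hilbert-space structure genuinely enters, and it is also why the statement fails in general Banach spaces --- the unit vector basis of $\ell^{p}$ with $p\neq2$ is a normalized unconditional basis obeying no $\ell^{2}$-type two-sided estimate, so any proof must exploit the inner product, and the averaging identity is precisely where it does.
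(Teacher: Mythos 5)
This proposition is stated in the paper without proof --- it is quoted from Christensen's book \cite{7} --- so there is no in-paper argument to compare against; judged on its own, your proof is correct and is essentially the classical one. The easy direction and the reduction to the normalized case via the diagonal operator $D$ are fine, and the core two-sided estimate is handled correctly: the identity $\mathrm{avg}_{\varepsilon}\|\sum\varepsilon_{k}v_{k}\|^{2}=\sum\|v_{k}\|^{2}$ (valid for independent signs even in a complex Hilbert space, since $\mathbb{E}[\varepsilon_{j}\varepsilon_{k}]=\delta_{jk}$) combined with the unconditional basis constant gives the lower bound, and applying the same argument to the coefficient functionals plus Cauchy--Schwarz gives the upper bound. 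Your sign-averaging step is in substance Orlicz's theorem in Hilbert space, which is also the engine of the textbook proof. Two imported facts deserve explicit acknowledgement, since they carry real content: (1) the existence of a uniform unconditional constant $K$ requires a Baire-category/closed-graph argument (e.g.\ showing the norm $|||x|||=\sup_{|a_{k}|\le1}\|\sum a_{k}u_{k}^{*}(x)u_{k}\|$ is complete and invoking the open mapping theorem), and (2) the claim that $\{u_{k}^{*}\}_{k=1}^{\infty}$ is an unconditional \emph{basis} of $H^{*}$ (not merely a basic sequence) uses that every basis of a reflexive space is shrinking. Both are standard, and your closing remark correctly identifies where the Hilbert structure enters and why the statement fails in $\ell^{p}$, $p\neq2$.
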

%\begin{thm}\cite{7}
%For a sequence $\{f_{k}\}\subseteq H$, the following conditions are
%equivalent:
%\\(1) $\{f_{k}\}$ is a Riesz basis for $H$.
%\\(2) $\{f_{k}\}$ is complete in $H$, and there exist constants $A,
%B>0$ such that for every finite scalar sequence $\{c_{k}\}$ one has
%$$A\sum|c_{k}|^{2}\leq\|\sum c_{k}f_{k}\|^{2}\leq
%B\sum|c_{k}|^{2}.$$
%\end{thm}
\begin{thm}\label{33333}\cite{7}
Suppose that $\{f_{k}\}_{k=1}^{\infty}\subseteq H$. Then the
following conditions are equivalent:
\\(1) $\{f_{k}\}_{k=1}^{\infty}$ is a Riesz basis for $H$.
\\(2) $\{f_{k}\}_{k=1}^{\infty}$ is complete and its Gram matrix $\{\langle f_{k},
f_{j}\rangle\}_{j, k=1}^{\infty}$ defines a bounded, invertible
operator on $\ell^{2}(\mathbb{N})$.
\end{thm}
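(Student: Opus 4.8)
The plan is to prove the two implications separately, leaning on the identification $T^{*}T=\{\langle f_{k},f_{j}\rangle\}_{j,k=1}^{\infty}$ recorded above together with Lemma \ref{1111}. For $(1)\Rightarrow(2)$ I would start from a representation $f_{k}=Ue_{k}$ with $\{e_{k}\}_{k=1}^{\infty}$ an orthonormal basis of $H$ and $U$ bounded and bijective; completeness of $\{f_{k}\}_{k=1}^{\infty}$ is then automatic because $U$ has dense range. Writing the synthesis operator as $T(\{c_{k}\})=\sum_{k}c_{k}Ue_{k}=U\big(\sum_{k}c_{k}e_{k}\big)$ exhibits $T=UV$, where $V:\ell^{2}(\mathbb{N})\to H$, $V(\{c_{k}\})=\sum_{k}c_{k}e_{k}$, is unitary; hence $T$ is bounded and bijective, and therefore so is the positive operator $T^{*}T$. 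Since the matrix of $T^{*}T$ with respect to the canonical basis of $\ell^{2}(\mathbb{N})$ is exactly the Gram matrix, assertion (2) follows.

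For $(2)\Rightarrow(1)$ I would first invoke Lemma \ref{1111}: boundedness of the Gram matrix $G$ makes $\{f_{k}\}_{k=1}^{\infty}$ a Bessel sequence, so the synthesis operator $T:\ell^{2}(\mathbb{N})\to H$ is bounded with $T^{*}T=G$. Invertibility of $G=T^{*}T$ forces $T$ to be bounded below, since $\|c\|\le\|G^{-1}\|\,\|Gc\|\le\|G^{-1}\|\,\|T^{*}\|\,\|Tc\|$ for every $c\in\ell^{2}(\mathbb{N})$; hence $T$ is injective with closed range. Because $\mathrm{ran}(T)$ contains all finite linear combinations of the $f_{k}$ and is closed, the completeness hypothesis upgrades this to $\mathrm{ran}(T)=H$, so $T$ is bounded and bijective. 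To match the definition of a Riesz basis, fix an orthonormal basis $\{h_{k}\}_{k=1}^{\infty}$ of $H$ — available since $H$, being the closed span of a countable set, is separable — let $W:\ell^{2}(\mathbb{N})\to H$ be the unitary with $We_{k}=h_{k}$, and set $U=TW^{-1}$; then $U:H\to H$ is bounded bijective and $Uh_{k}=Te_{k}=f_{k}$, so $\{f_{k}\}_{k=1}^{\infty}=\{Uh_{k}\}_{k=1}^{\infty}$ is a Riesz basis.

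I expect the norm estimates and the matrix bookkeeping to be routine; the step requiring genuine care is the passage from ``closed range'' to ``surjective'' in $(2)\Rightarrow(1)$, where both the invertibility of the Gram matrix and the completeness assumption are indispensable — indeed, a frame which is not a Riesz basis shows that completeness with a merely bounded Gram matrix does not suffice, while an incomplete orthonormal system shows that invertibility of the Gram matrix alone does not suffice either.
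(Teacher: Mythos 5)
Your proof is correct. The paper itself gives no proof of this statement --- it is quoted from Christensen's book \cite{7} --- and your argument (factoring the synthesis operator as $T=UV$ with $V$ unitary for $(1)\Rightarrow(2)$, and for $(2)\Rightarrow(1)$ using Lemma \ref{1111} to get boundedness of $T$, the estimate $\|c\|\le\|G^{-1}\|\,\|T^{*}\|\,\|Tc\|$ to get $T$ bounded below, and completeness to upgrade closed range to surjectivity) is essentially the standard proof found there, with the separability point and the closed-range step handled correctly.
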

%\begin{thm}\label{888}
%Let $\{f_{k}\}\subseteq H$. Then the following statements hold:
%\\(i) $\{f_{k}\}_{k}$ is a Bessel sequence if $G:R(T^{*})\rightarrow
%R(T^{*})$ is injective and $R(G)$ is ....
%\\(ii) $\{f_{i}\}$ is a frame for $H$ if and only if $G:R(T^{*})\rightarrow
%R(T^{*})$ is a bounded and invertible operator.
%\\(iii) $\{f_{i}\}$ is a Riesz basis if and only if $G:\ell^{2}(\mathbb{N})\rightarrow
%\ell^{2}(\mathbb{N})$ is a bounded and invertible operator.
%\end{thm}
%\begin{defn}
%Let $A$ be an operator on a Hilbert space $H$, and suppose that
%$\{e_{k}\}_{k=1}^{\infty}$ is an orthonormal basis for $H$. We
%define the Hilbert-Schmidt norm of $A$ to be
%$\|A\|_{2}=(\sum_{k=1}^{\infty}\|A(e_{k})\|^{2})^{\frac{1}{2}})$. An
%operator $A$ is called a Hilbert-Schmidt operator if
%$\|A\|_{2}<\infty$.
%\end{defn}
\section{Cross-Gram matrix}
If $\{f_{k}\}_{k=1}^{\infty}$ and $\{g_{k}\}_{k=1}^{\infty}$ are
Bessel sequences, we compose the synthesis operator of the sequence
$\{f_{k}\}_{k=1}^{\infty}$, $T_{f_{k}}$, and the analysis operator
of the sequence $\{g_{k}\}_{k=1}^{\infty}$, $T^{*}_{g_{k}}$, to
obtain a bounded operator on $\ell^{2}(\mathbb{N})$
$$T^{*}_{g_{k}}T_{f_{k}}:\ell^{2}(\mathbb{N})\rightarrow\ell^{2}(\mathbb{N});\
\
T^{*}_{g_{k}}T_{f_{k}}\{c_{k}\}_{k=1}^{\infty}=\{\langle\sum_{\ell=1}^{\infty}c_{\ell}f_{\ell},
g_{k}\rangle\}_{k=1}^{\infty}.$$ This operator,
$G=T^{*}_{g_{k}}T_{f_{k}}$, is called the cross-Gram operator
associated to $\{\langle f_{k}, g_{j}\rangle\}_{j, k=1}^{\infty}$
\cite{B, p}.
%\\To shorten when we use the operator $G$, we mean the
%cross-Gram operator associated to sequences $\{\langle f_{k},
%g_{j}\rangle\}_{j, k=1}^{\infty}$.
\\If $\{e_{k}\}_{k=1}^{\infty}$ is the canonical orthonormal basis for
$\ell^{2}(\mathbb{N})$, the jk-th entry in the matrix representation
for $T^{*}_{g_{k}}T_{f_{k}}$ is
$$\langle T^{*}_{g_{k}}T_{f_{k}}e_{k},
e_{j}\rangle=\langle T_{f_{k}}e_{k}, T_{g_{k}}e_{j}\rangle=\langle
f_{k}, g_{j}\rangle.$$ Therefore the matrix representation of
$T^{*}_{g_{k}}T_{f_{k}}$ is as follows:
$$T^{*}_{g_{k}}T_{f_{k}}=\{\langle f_{k}, g_{j}\rangle\}_{j,
k=1}^{\infty}.$$ The matrix $\{\langle f_{k}, g_{j}\rangle\}_{j,
k=1}^{\infty}$ is called the cross-Gram matrix associated to
$\{f_{k}\}_{k=1}^{\infty}$ and $\{g_{k}\}_{k=1}^{\infty}$ \cite{B,
p}. In the special case that the sequences
$\{f_{k}\}_{k=1}^{\infty}$ and $\{g_{k}\}_{k=1}^{\infty}$ are
biorthogonal, the cross-Gram matrix is the identity matrix.
%this operator, $G=T^{*}_{g_{k}}T_{f_{k}}$, the cross-Gram operator
%associated to $\{\langle f_{k}, g_{j}\rangle\}_{j, k=1}^{\infty}$
\\The above discission shows that the cross-Gram matrix is bounded
above if $\{f_{k}\}_{k=1}^{\infty}$ and $\{g_{k}\}_{k=1}^{\infty}$
are Bessel sequences.
%\begin{thm}\label{dir}
%Suppose that $\{f_{k}\}$ and $\{g_{k}\}$ are Bessel sequences with
%bounds $B$ and $B'$, respectively. Then the cross-Gram matrix
%associated to these sequences defines a bounded operator on
%$\ell^{2}(\mathbb{N})$ with bound $\sqrt{BB'}$.
%\end{thm}
%\begin{proof}
%Suppose that $\{f_{k}\}$ is a Bessel sequence with the synthesis
%operator $T_{f_{k}}$ and $\{g_{k}\}$ is a Bessel sequence with the
%analysis operator $T^{*}_{g_{k}}$. We can compose the synthesis
%operator $T_{f_{k}}$ and the analysis operator $T^{*}_{g_{k}}$ to
%obtain the bounded operator
%$$T^{*}_{g_{k}}T_{f_{k}}:\ell^{2}(\mathbb{N})\rightarrow\ell^{2}(\mathbb{N});\
%\
%T^{*}_{g_{k}}T_{f_{k}}(\{c_{k}\}_{k=1}^{\infty})=\{\langle\sum_{\ell=1}^{\infty}c_{\ell}f_{\ell},
%g_{k}\rangle\}_{k=1}^{\infty}.$$ Letting $\{e_{k}\}_{k=1}^{\infty}$
%be the canonical orthonormal basis for $\ell^{2}(\mathbb{N})$, the
%jk-th entry in the matrix representation for
%$T^{*}_{g_{k}}T_{f_{k}}$ is
%$$\langle T^{*}_{g_{k}}T_{f_{k}}e_{k}, e_{j}\rangle=\langle
%T_{f_{k}}e_{k}, T_{g_{k}}e_{j}\rangle=\langle f_{k}, g_{j}\rangle.$$
%Identifying $T^{*}_{g_{k}}T_{f_{k}}$ with its matrix representation,
%we write
%$$T^{*}_{g_{k}}T_{f_{k}}=\{\langle f_{k}, g_{j}\rangle\}_{j,
%k=1}^{\infty}.$$ Since $\|T^{*}_{g_{k}}T_{f_{k}}\|\leq\sqrt{BB'}$,
%we get the proof.
%\end{proof}
The following example shows that the inverse of the above assertion
is not valid, in other words, the cross-Gram matrix associated to
two sequences can be well-defined and bounded in the case that one
of the sequences is not Bessel.
\begin{exam}\label{ka}
Suppose that $\{e_{k}\}_{k=1}^{\infty}$ is the orthonormal basis for
a Hilbert space $H$. Consider
$\{f_{k}\}_{k=1}^{\infty}=\{\frac{1}{k}e_{k}\}_{k=1}^{\infty}$ and
$\{g_{k}\}_{k=1}^{\infty}=\{ke_{k}\}_{k=1}^{\infty}$. A simple
calculation shows that the cross-Gram matrix associated to these
sequences is the identity matrix, but $\{g_{k}\}_{k=1}^{\infty}$ is
not a Bessel sequence.
\end{exam}
\begin{defn}
Let $U$ be an operator on a Hilbert space $H$, and suppose that $E$
is an orthonormal basis for $H$. We say that $U$ is a
Hilbert-Schmidt operator if
$$\|U\|_{2}=(\sum_{x\in E}\|Ux\|^{2})^{\frac{1}{2}}<\infty.$$
\end{defn}
\begin{thm}
Suppose that $\{f_{k}\}_{k=1}^{\infty}$ and
$\{g_{k}\}_{k=1}^{\infty}$ are sequences in $H$ and
$\{g_{k}\}_{k=1}^{\infty}$ is a Bessel sequence with bound $B'$.
Assume that there exists $M>0$ such that
$\sum_{k=1}^{\infty}\|f_{k}\|^{2}\leq M.$ Then the cross-Gram
operator associated to $\{\langle f_{k}, g_{j}\rangle\}_{j,
k=1}^{\infty}$ is a well-defined, bounded and compact operator.
\end{thm}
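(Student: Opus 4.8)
The plan is to write the cross-Gram operator as the composition $G = T^{*}_{g_{k}}T_{f_{k}}$ already exhibited above and to control each factor separately. First I would note that the hypothesis $\sum_{k=1}^{\infty}\|f_{k}\|^{2}\leq M$ by itself forces $\{f_{k}\}_{k=1}^{\infty}$ to be a Bessel sequence, since for every $f\in H$ the Cauchy--Schwarz inequality gives $\sum_{k=1}^{\infty}|\langle f,f_{k}\rangle|^{2}\leq\|f\|^{2}\sum_{k=1}^{\infty}\|f_{k}\|^{2}\leq M\|f\|^{2}$. The same summability also shows directly that for $\{c_{k}\}_{k=1}^{\infty}\in\ell^{2}(\mathbb{N})$ the partial sums of $\sum_{k}c_{k}f_{k}$ form a Cauchy sequence, because $\|\sum_{k=m}^{n}c_{k}f_{k}\|\leq(\sum_{k=m}^{n}|c_{k}|^{2})^{1/2}(\sum_{k=m}^{n}\|f_{k}\|^{2})^{1/2}$; hence the synthesis operator $T_{f_{k}}\colon\ell^{2}(\mathbb{N})\to H$ is well-defined and bounded with $\|T_{f_{k}}\|\leq\sqrt{M}$. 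Since $\{g_{k}\}_{k=1}^{\infty}$ is Bessel with bound $B'$, its analysis operator $T^{*}_{g_{k}}\colon H\to\ell^{2}(\mathbb{N})$ is bounded with $\|T^{*}_{g_{k}}\|\leq\sqrt{B'}$. Consequently $G=T^{*}_{g_{k}}T_{f_{k}}$ is a well-defined bounded operator on $\ell^{2}(\mathbb{N})$ with $\|G\|\leq\sqrt{MB'}$, and its matrix is $\{\langle f_{k},g_{j}\rangle\}_{j,k=1}^{\infty}$ by the entrywise computation carried out earlier.

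For compactness I would show that $T_{f_{k}}$ is a Hilbert--Schmidt operator. Evaluating the Hilbert--Schmidt norm on the canonical orthonormal basis $\{e_{k}\}_{k=1}^{\infty}$ of $\ell^{2}(\mathbb{N})$ gives $\|T_{f_{k}}\|_{2}^{2}=\sum_{k=1}^{\infty}\|T_{f_{k}}e_{k}\|^{2}=\sum_{k=1}^{\infty}\|f_{k}\|^{2}\leq M<\infty$, so $T_{f_{k}}$ is Hilbert--Schmidt and in particular compact. Since the product of a compact operator with a bounded operator is again compact, $G=T^{*}_{g_{k}}T_{f_{k}}$ is compact; it is in fact Hilbert--Schmidt with $\|G\|_{2}\leq\|T^{*}_{g_{k}}\|\,\|T_{f_{k}}\|_{2}\leq\sqrt{B'M}$.

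To avoid quoting the fact that every Hilbert--Schmidt operator is compact, one can instead use an explicit finite-rank approximation: let $P_{n}$ be the orthogonal projection of $\ell^{2}(\mathbb{N})$ onto the span of $e_{1},\dots,e_{n}$ and set $G_{n}=GP_{n}$. Each $G_{n}$ has rank at most $n$, and the bound $\|T_{f_{k}}(I-P_{n})\{c_{k}\}\|\leq\|\{c_{k}\}\|\,(\sum_{k>n}\|f_{k}\|^{2})^{1/2}$ yields $\|G-G_{n}\|\leq\sqrt{B'}\,(\sum_{k>n}\|f_{k}\|^{2})^{1/2}\to 0$ as $n\to\infty$, because $\sum_{k}\|f_{k}\|^{2}$ converges. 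Hence $G$ is a norm limit of finite-rank operators and is therefore compact. I expect no serious obstacle in this argument; the one point that really uses the summability hypothesis, rather than merely the Bessel property of $\{f_{k}\}$, is precisely the step from boundedness of $G$ to its compactness.
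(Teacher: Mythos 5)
Your argument is correct, and its main line is essentially the paper's: the same two ingredients (the Bessel bound $B'$ for $\{g_k\}$ and the summability $\sum_k\|f_k\|^2\leq M$) combine to give the bound $\sqrt{B'M}$ and a finite Hilbert--Schmidt norm. The difference is one of packaging plus one genuinely distinct element. The paper estimates $\|G\{c_k\}\|$ directly from the matrix entries and then computes $\sum_k\|Ge_k\|^2\leq B'M$ to conclude that $G$ itself is Hilbert--Schmidt, citing the fact that Hilbert--Schmidt operators are compact. You instead factor $G=T^{*}_{g_k}T_{f_k}$, observe that the summability hypothesis alone makes $T_{f_k}$ bounded (indeed Hilbert--Schmidt, with $\|T_{f_k}\|_2^2=\sum_k\|f_k\|^2$), and then invoke the ideal property of compact (or Hilbert--Schmidt) operators. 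This factorization makes visible exactly where each hypothesis enters, and your closing remark --- that the finite-rank approximation $G_n=GP_n$ with $\|G-G_n\|\leq\sqrt{B'}\bigl(\sum_{k>n}\|f_k\|^2\bigr)^{1/2}\to 0$ gives compactness without quoting the Hilbert--Schmidt--implies--compact theorem --- is a more elementary and self-contained route to the compactness claim than the paper's citation. Both proofs yield the same quantitative bounds; yours additionally records the useful intermediate fact that $\{f_k\}$ is automatically a Bessel sequence with bound $M$.
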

\begin{proof}
Suppose that $G=\{\langle f_{k}, g_{j}\rangle\}_{j, k=1}^{\infty}$.
For a given sequence
$\{c_{k}\}_{k=1}^{\infty}\in\ell^{2}(\mathbb{N})$ we have
\begin{eqnarray*}
\|G\{c_{k}\}_{k=1}^{\infty}\|^{2}&=&\sum_{j=1}^{\infty}|\sum_{k=1}^{\infty}c_{k}\langle
f_{k},
g_{j}\rangle|^{2}\\&\leq&\sum_{j=1}^{\infty}\sum_{k=1}^{\infty}|c_{k}|^{2}\sum_{k=1}^{\infty}|\langle
f_{k},
g_{j}\rangle|^{2}\\&=&\sum_{k=1}^{\infty}|c_{k}|^{2}\sum_{k=1}^{\infty}\sum_{j=1}^{\infty}|\langle
f_{k}, g_{j}\rangle|^{2}\\&\leq&
B'\sum_{k=1}^{\infty}|c_{k}|^{2}\sum_{k=1}^{\infty}\|f_{k}\|^{2}\\&\leq&
B'M\sum_{k=1}^{\infty}|c_{k}|^{2}.
\end{eqnarray*}
By above assertion,
$G\{c_{k}\}_{k=1}^{\infty}\in\ell^{2}(\mathbb{N})$ and therefore $G$
is well-defined and bounded.
\\ Now suppose that $\{e_{k}\}_{k=1}^{\infty}$ is the canonical
orthonormal basis for $\ell^{2}(\mathbb{N})$. Then
\begin{eqnarray*}
(\sum_{k=1}^{\infty}\|G(e_{k})\|^{2})^{\frac{1}{2}}&=&(\sum_{k=1}^{\infty}\sum_{j=1}^{\infty}|\langle
f_{k}, g_{j}\rangle|^{2})^{\frac{1}{2}}\\
&\leq&\sqrt{B'}(\sum_{k=1}^{\infty}\|f_{k}\|^{2})^{\frac{1}{2}}\leq\sqrt{B'M}.
\end{eqnarray*}
Therefore $G$ is a Hilbert-Schmidt operator and so is
compact\cite{ped}.
\end{proof}
\begin{exam}
Let $\{e_{k}\}_{k=1}^{\infty}$ be an orthonormal basis for $H$.
Consider $\{f_{k}\}_{k=1}^{\infty}=\{e_{1},\frac{1}{2}e_{2},
\frac{1}{3}e_{3}, \frac{1}{4}e_{4},...\}$ and
$\{g_{k}\}_{k=1}^{\infty}=\{\frac{1}{2}e_{1}, e_{2},
\frac{1}{2^{2}}e_{1}, e_{3},...\}.$ Suppose that $G$ is the
cross-Gram operator associated to $\{\langle f_{k},
g_{j}\rangle\}_{j, k=1}^{\infty}$. A simple calculation shows that
$$\sum_{k=1}^{\infty}\|G(e_{k})\|^{2}=\sum_{k=1}^{\infty}\frac{1}{2^{2k}}+\sum_{k=1}^{\infty}\frac{1}{k^{2}}.$$
Therefore $G$ is a Hilbert-Schmidt operator and so is compact.
\end{exam}
%\begin{cor}
%Suppose that $G$ is the cross-Gram matrix associated to the
%sequences $\{f_{k}\}_{k=1}^{\infty}$ and $\{g_{k}\}_{k=1}^{\infty}$.
%If $G$ is a Hilbert-Schmidt operator, then $G$ is well-defined and
%bounded.
%\end{cor}
%\begin{thm}
%Suppose that $\{f_{k}\}$ and $\{g_{k}\}$ are Bessel sequences and
%$G$ is the cross-Gram matrix associated to $\{\langle f_{k},
%g{j}\rangle\}_{j, k}$. If there exists $M>0$ such that
%$\sum_{k=1}^{\infty}\|f_{k}\|^{2}<M$, then $G$ is the
%Hilbert-Schmidt operator.
%\end{thm}
%\begin{proof}
%\end{proof}
If $\sup_{k}\|f_{k}\|<\infty$ (resp. $\inf_{k}\|f_{k}\|>0$), the
sequence $\{f_{k}\}_{k=1}^{\infty}$ will be called norm-bounded
above or NBA, (resp. norm-bounded below or NBB).
\begin{thm}
Let $\{f_{k}\}_{k=1}^{\infty}$ and $\{g_{k}\}_{k=1}^{\infty}$ be
sequences for $H$ and $G$ be the cross-Gram operator associated to
$\{\langle f_{k}, g_{j}\rangle\}_{j, k=1}^{\infty}$. Then the
following statements are satisfied:
\\(i) Assume that $G$ is well-defined and bounded above and $\{g_{k}\}_{k=1}^{\infty}$ is a frame with lower bound $A$.
Then $\{f_{k}\}_{k=1}^{\infty}$ is norm bounded above.
\\(ii) Assume that $G$ is well-defined and bounded below and $\{g_{k}\}_{k=1}^{\infty}$ is a Bessel sequence with upper bound
$B$. Then $\{f_{k}\}_{k=1}^{\infty}$ is norm bounded below.
\\(iii) Assume that $G$ is well-defined and bounded above and $\{f_{k}\}_{k=1}^{\infty}$ is an orthonormal basis for $H$.
Then $\{g_{k}\}_{k=1}^{\infty}$ is a Bessel sequence for $H$.
\end{thm}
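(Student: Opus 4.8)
The plan is to probe the operator $G$ on the canonical orthonormal basis $\{e_k\}_{k=1}^{\infty}$ of $\ell^{2}(\mathbb{N})$ for parts (i) and (ii), and on coefficient sequences supplied by the reconstruction formula for an orthonormal basis in part (iii). The computation that drives everything is simply that $Ge_m=\{\langle f_m, g_j\rangle\}_{j=1}^{\infty}$, hence $\|Ge_m\|^{2}=\sum_{j=1}^{\infty}|\langle f_m, g_j\rangle|^{2}$.

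For (i): since $\{g_k\}_{k=1}^{\infty}$ is a frame with lower bound $A$, apply the left-hand inequality of (\ref{9999}) with $f=f_m$ to obtain $A\|f_m\|^{2}\leq\sum_{j=1}^{\infty}|\langle f_m, g_j\rangle|^{2}=\|Ge_m\|^{2}\leq\|G\|^{2}\|e_m\|^{2}=\|G\|^{2}$. Thus $\sup_m\|f_m\|\leq\|G\|/\sqrt{A}<\infty$, i.e. $\{f_k\}_{k=1}^{\infty}$ is NBA. For (ii): interpret ``bounded below'' as the existence of $C>0$ with $\|Gc\|\geq C\|c\|$ for all $c\in\ell^{2}(\mathbb{N})$; taking $c=e_m$ and using the Bessel bound $B$ of $\{g_k\}_{k=1}^{\infty}$ gives $C\leq\|Ge_m\|=\big(\sum_{j=1}^{\infty}|\langle f_m, g_j\rangle|^{2}\big)^{1/2}\leq\sqrt{B}\,\|f_m\|$, so $\inf_m\|f_m\|\geq C/\sqrt{B}>0$, i.e. $\{f_k\}_{k=1}^{\infty}$ is NBB.

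For (iii): fix $f\in H$ and put $c_k=\langle f, f_k\rangle$. Since $\{f_k\}_{k=1}^{\infty}$ is an orthonormal basis, $\{c_k\}_{k=1}^{\infty}\in\ell^{2}(\mathbb{N})$ with $\|\{c_k\}\|=\|f\|$ and $\sum_{k=1}^{N}c_kf_k\to f$ in $H$. The $j$-th coordinate of $G$ applied to the truncated sequence $\{c_1,\dots,c_N,0,0,\dots\}$ is $\langle\sum_{k=1}^{N}c_kf_k, g_j\rangle$, which converges to $\langle f, g_j\rangle$ as $N\to\infty$ for each fixed $j$. Because $G$ is bounded, $G$ of these truncations converges in $\ell^{2}(\mathbb{N})$ to $G\{c_k\}$; matching the $\ell^{2}$-limit with the coordinatewise limit yields $G\{c_k\}=\{\langle f, g_j\rangle\}_{j=1}^{\infty}$. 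Consequently $\sum_{j=1}^{\infty}|\langle f, g_j\rangle|^{2}=\|G\{c_k\}\|^{2}\leq\|G\|^{2}\|f\|^{2}$, so $\{g_k\}_{k=1}^{\infty}$ is a Bessel sequence with bound $\|G\|^{2}$.

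I expect the only delicate point to be the limiting argument in (iii): one must justify carefully that the $\ell^{2}$-limit of $G$ on the truncated coefficient sequences coincides with the coordinatewise limit $\{\langle f, g_j\rangle\}_{j}$, using continuity of $G$ together with continuity and linearity of the inner product in its first slot. Parts (i) and (ii) are then just direct substitutions of the basis vectors $e_m$ into the frame, respectively Bessel, inequalities, and require no further work.
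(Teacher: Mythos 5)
Your proof is correct and follows essentially the same route as the paper: parts (i) and (ii) test $G$ on the canonical basis vectors $e_m$ and then invoke the lower frame bound, respectively the Bessel bound, of $\{g_k\}_{k=1}^{\infty}$, and part (iii) feeds the orthonormal expansion coefficients of $f$ into the boundedness inequality for $G$. Your truncation-and-limit argument identifying $G\{\langle f,f_k\rangle\}$ with $\{\langle f,g_j\rangle\}_{j}$ is a welcome extra step of rigor that the paper's proof of (iii) leaves implicit.
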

\begin{proof}
(i) Since $G$ is well-defined and bounded, there exists a constant
$M>0$ such that
\begin{equation}\label{seen}
\sum_{j=1}^{\infty}|\sum_{k=1}^{\infty}c_{k}\langle f_{k},
g_{j}\rangle|^{2}\leq M\sum_{k=1}^{\infty}|c_{k}|^{2}.
\end{equation}
Via (\ref{seen}) applied to the elements of the canonical
orthonormal basis of $\ell^{2}(\mathbb{N})$, we have
\begin{equation}\label{saw}
\sum_{j=1}^{\infty}|\langle f_{k}, g_{j}\rangle|^{2}\leq M,\ \
k\in\mathbb{N}.
\end{equation}
Since $\{g_{k}\}_{k=1}^{\infty}$ is a frame for $H$, by (\ref{saw})
we have
\begin{equation}
A\|f_{k}\|^{2}\leq\sum_{j=1}^{\infty}|\langle f_{k},
g_{j}\rangle|^{2}\leq M,\ \ \ell\in\mathbb{N}.
\end{equation}
Therefore
$$\|f_{k}\|^{2}\leq \frac{M}{A},\ \ k\in\mathbb{N}.$$
(ii) By assumption there exist $M'>0$, such that
\begin{equation}\label{shiv}
M'\sum_{k=1}^{\infty}|c_{k}|^{2}\leq\sum_{j=1}^{\infty}|\sum_{k=1}^{\infty}c_{k}\langle
f_{k}, g_{j}\rangle|^{2}.
\end{equation}
Similar to above discussion, since $\{g_{k}\}_{k=1}^{\infty}$ is a
Bessel sequence, for each $k\in\mathbb{N}$, via (\ref{shiv}) applied
to the elements of the canonical orthonormal basis of
$\ell^{2}(\mathbb{N})$, we have
\begin{equation}
M' \leq\sum_{j=1}^{\infty}|\langle f_{k}, g_{j}\rangle|^{2}\leq
B\|f_{k}\|^{2}.
\end{equation}
Therefore
$$\|f_{k}\|^{2}\geq\frac{M'}{B},\ \ k\in\mathbb{N}.$$
\\(iii) Since $G$ is well-defined and bounded above there exists $M''>0$,
such that
\begin{eqnarray}\label{maja}
\sum_{j=1}^{\infty}|\sum_{k=1}^{\infty}c_{k}\langle f_{k},
g_{j}\rangle|^{2}\leq M''\sum_{k=1}^{\infty}|c_{k}|^{2}.
\end{eqnarray}
Since $\{f_{k}\}_{k=1}^{\infty}$ is an orthonormal basis for $H$,
there exist a sequence $\{c_{k}\}\in\ell^{2}(\mathbb{N})$, such that
for each $f\in H$ we have
\begin{eqnarray*}
\sum_{j=1}^{\infty}|\langle f,
g_{j}\rangle|^{2}=\sum_{j=1}^{\infty}|\langle\sum_{k=1}^{\infty}c_{k}f_{k},
g_{j}\rangle|^{2}.
\end{eqnarray*}
Now by (\ref{maja}) we have
\begin{eqnarray*}
\sum_{j=1}^{\infty}|\sum_{k=1}^{\infty}c_{k}\langle f_{k},
g_{j}\rangle|^{2}\leq M''\sum_{k=1}^{\infty}|c_{k}|^{2},
\end{eqnarray*}
Therefore
\begin{eqnarray*}
\sum_{j=1}^{\infty}|\langle f, g_{j}\rangle|^{2}\leq
M''\sum_{k=1}^{\infty}|c_{k}|^{2}=M''\|f\|^{2}.
\end{eqnarray*}
\end{proof}
\begin{prop}
Suppose that $\{f_{k}\}_{k=1}^{\infty}$ and
$\{g_{k}\}_{k=1}^{\infty}$ are Bessel sequences and $G$ is the
cross-Gram operator associated to $\{\langle f_{k},
g_{j}\rangle\}_{j, k=1}^{\infty}$. Then the following statements are
satisfied:
\\(i) If $\{f_{k}\}_{k=1}^{\infty}$ is a Riesz basis and $\{g_{k}\}_{k=1}^{\infty}$ is a frame for
$H$, then $G$ is a bounded injective operator.
\\(ii) If $\{f_{k}\}_{k=1}^{\infty}$ is a frame and $\{g_{k}\}_{k=1}^{\infty}$ is a Riesz basis
for $H$, then $G$ is a bounded surjective operator.
\end{prop}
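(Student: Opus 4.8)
The plan is to factor $G=T^{*}_{g_{k}}T_{f_{k}}$ as the composition of the synthesis operator of $\{f_{k}\}_{k=1}^{\infty}$ with the analysis operator of $\{g_{k}\}_{k=1}^{\infty}$, and then to read off injectivity (for (i)) and surjectivity (for (ii)) from the corresponding properties of these two factors. Boundedness of $G$ in both parts is immediate: since $\{f_{k}\}_{k=1}^{\infty}$ and $\{g_{k}\}_{k=1}^{\infty}$ are Bessel sequences, $T_{f_{k}}$ and $T^{*}_{g_{k}}$ are bounded, so $G$ is bounded, as already noted after the definition of the cross-Gram matrix.

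For (i), I would write $\{f_{k}\}_{k=1}^{\infty}=\{Ue_{k}\}_{k=1}^{\infty}$ with $\{e_{k}\}_{k=1}^{\infty}$ an orthonormal basis for $H$ and $U$ bounded and bijective; identifying $\ell^{2}(\mathbb{N})$ with $H$ via $\{e_{k}\}_{k=1}^{\infty}$, the synthesis operator $T_{f_{k}}$ acts as $U$ and is therefore injective. On the other side, since $\{g_{k}\}_{k=1}^{\infty}$ is a frame it is complete, so $\ker T^{*}_{g_{k}}=\overline{\operatorname{span}}\{g_{k}\}^{\perp}=\{0\}$, i.e. the analysis operator $T^{*}_{g_{k}}$ is injective. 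Hence $Gc=T^{*}_{g_{k}}(T_{f_{k}}c)=0$ forces $T_{f_{k}}c=0$ and then $c=0$, so $G$ is injective. (Alternatively, one can argue via Theorem \ref{33333}: the Gram matrix $T^{*}_{f_{k}}T_{f_{k}}$ is invertible, hence $T_{f_{k}}$ is injective.)

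For (ii), since $\{f_{k}\}_{k=1}^{\infty}$ is a frame its frame operator $S_{f_{k}}=T_{f_{k}}T^{*}_{f_{k}}$ is bounded and invertible, so $\operatorname{ran}(T_{f_{k}})\supseteq\operatorname{ran}(S_{f_{k}})=H$ and $T_{f_{k}}$ is surjective. Writing $\{g_{k}\}_{k=1}^{\infty}=\{Ve_{k}\}_{k=1}^{\infty}$ with $V$ bounded and bijective, the analysis operator satisfies $T^{*}_{g_{k}}f=\{\langle f,Ve_{k}\rangle\}_{k}=\{\langle V^{*}f,e_{k}\rangle\}_{k}$, which under the same identification is just $V^{*}f$; since $V^{*}$ is bijective, $T^{*}_{g_{k}}$ is surjective. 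Therefore $\operatorname{ran}(G)=T^{*}_{g_{k}}(\operatorname{ran}(T_{f_{k}}))=T^{*}_{g_{k}}(H)=\ell^{2}(\mathbb{N})$, so $G$ is surjective.

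I do not expect a serious obstacle here; the only care needed is in justifying, from the definition of a Riesz basis and the frame property, the injectivity/surjectivity of the two factor operators — standard facts that can also be deduced from Lemma \ref{1111} and Theorem \ref{33333}. The factorization viewpoint additionally makes transparent why the missing half fails when exactly one of the sequences is merely a frame: in (i), $\operatorname{ran}(G)=T^{*}_{g_{k}}(H)=\operatorname{ran}(T^{*}_{g_{k}})$ is a proper closed subspace of $\ell^{2}(\mathbb{N})$ unless $\{g_{k}\}_{k=1}^{\infty}$ is a Riesz basis, and symmetrically in (ii), $\ker G\supseteq\ker T_{f_{k}}\neq\{0\}$ unless $\{f_{k}\}_{k=1}^{\infty}$ is a Riesz basis, consistent with the claim in the abstract.
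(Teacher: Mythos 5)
Your argument is correct and is essentially the paper's own proof: both factor $G=T^{*}_{g_{k}}T_{f_{k}}$ and deduce injectivity in (i) from the injectivity of $T^{*}_{g_{k}}$ (frame) together with the injectivity of $T_{f_{k}}$ (Riesz basis), and surjectivity in (ii) from the surjectivity of $T_{f_{k}}$ (frame) together with the bijectivity of $T^{*}_{g_{k}}$ (Riesz basis). The only difference is cosmetic — you justify these standard facts about the factor operators via the $Ue_{k}$ representation and the frame operator, while the paper simply cites them.
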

\begin{proof}
Since $\{f_{k}\}_{k=1}^{\infty}$ and $\{g_{k}\}_{k=1}^{\infty}$ are
Bessel sequences, we deduce that $G$ is a well-defined and bounded
operator.
\\(i) Suppose that
$$G\{c_{k}\}_{k=1}^{\infty}=G\{b_{k}\}_{k=1}^{\infty},\ \ \{c_{k}\}_{k=1}^{\infty},
\{b_{k}\}_{k=1}^{\infty}\in\ell^{2}(\mathbb{N}).$$ Then
$T^{*}_{g_{k}}T_{f_{k}}\{c_{k}\}_{k=1}^{\infty}=T^{*}_{g_{k}}T_{f_{k}}\{b_{k}\}_{k=1}^{\infty}.$
Since $\{g_{k}\}_{k=1}^{\infty}$ is a frame for $H$, $T^{*}_{g_{k}}$
is an injective operator and we have
$T_{f_{k}}\{c_{k}\}_{k=1}^{\infty}=T_{f_{k}}\{b_{k}\}_{k=1}^{\infty}.$
Since $\{f_{k}\}_{k=1}^{\infty}$ is a Riesz basis, $T_{f_{k}}$ is
invertible. So $\{c_{k}\}_{k=1}^{\infty}=\{b_{k}\}_{k=1}^{\infty}$
and we get the result.
\\(ii) Since $\{g_{k}\}_{k=1}^{\infty}$ is a Riesz basis,
$T^{*}_ {g_{k}}$ is a bijective operator. Therefore for a given
sequence $\{c_{k}\}_{k=1}^{\infty}\in\ell^{2}(\mathbb{N})$, there
exist $h\in H$ such that $T^{*}_{g_{k}}h=\{c_{k}\}_{k=1}^{\infty}.$
Also by assumption $T_{f_{k}}$ is a surjective operator and there
exists $\{b_{k}\}_{k=1}^{\infty}\in\ell^{2}(\mathbb{N})$ such that
$T_{f_{k}}\{b_{k}\}_{k=1}^{\infty}=h.$ Therefore we have
$T^{*}_{g_{k}}T_{f_{k}}\{b_{k}\}_{k=1}^{\infty}=\{c_{k}\}_{k=1}^{\infty}$
and so $G\{b_{k}\}_{k=1}^{\infty}=\{c_{k}\}_{k=1}^{\infty}.$
\end{proof}
\begin{thm}\label{yani}
Suppose that $\{f_{k}\}_{k=1}^{\infty}$ and
$\{g_{k}\}_{k=1}^{\infty}$ are Riesz bases for $H$. Then
$\{f_{k}\}_{k=1}^{\infty}$ and $\{g_{k}\}_{k=1}^{\infty}$ are
complete and the cross-Gram matrix associated to
$\{f_{k}\}_{k=1}^{\infty}$ and $\{g_{k}\}_{k=1}^{\infty}$ defines a
bounded invertible operator on $\ell^{2}(\mathbb{N})$.
\end{thm}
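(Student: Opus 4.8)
The plan is to realize the cross-Gram operator $G$ as a composition of invertible bounded operators. Recall from the discussion at the start of Section~2 that $G=T^{*}_{g_{k}}T_{f_{k}}$, where $T_{f_{k}}:\ell^{2}(\mathbb{N})\to H$ is the synthesis operator of $\{f_{k}\}_{k=1}^{\infty}$ and $T^{*}_{g_{k}}:H\to\ell^{2}(\mathbb{N})$ is the analysis operator of $\{g_{k}\}_{k=1}^{\infty}$. Since a Riesz basis is in particular a Bessel sequence, the remark following the definition of the cross-Gram matrix already guarantees that $G$ is well-defined and bounded on $\ell^{2}(\mathbb{N})$, so only completeness and invertibility remain to be shown. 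Completeness is immediate: writing $f_{k}=U_{1}\phi_{k}$ with $\{\phi_{k}\}_{k=1}^{\infty}$ an orthonormal basis of $H$ and $U_{1}$ a bounded bijection, one has $\overline{\operatorname{span}}\{f_{k}\}=U_{1}(\overline{\operatorname{span}}\{\phi_{k}\})=U_{1}(H)=H$, and similarly for $\{g_{k}\}_{k=1}^{\infty}$ (alternatively, an unconditional basis is complete, by Proposition~\ref{ch}).

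The key step is to show that both $T_{f_{k}}$ and $T_{g_{k}}$ are bounded bijections. With the notation above, for $\{c_{k}\}_{k=1}^{\infty}\in\ell^{2}(\mathbb{N})$ we have
$$T_{f_{k}}(\{c_{k}\}_{k=1}^{\infty})=\sum_{k=1}^{\infty}c_{k}U_{1}\phi_{k}=U_{1}\Bigl(\sum_{k=1}^{\infty}c_{k}\phi_{k}\Bigr)=U_{1}V_{\phi}(\{c_{k}\}_{k=1}^{\infty}),$$
where $V_{\phi}:\ell^{2}(\mathbb{N})\to H$, $\{c_{k}\}_{k=1}^{\infty}\mapsto\sum_{k=1}^{\infty}c_{k}\phi_{k}$, is a unitary operator because $\{\phi_{k}\}_{k=1}^{\infty}$ is an orthonormal basis. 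Hence $T_{f_{k}}=U_{1}V_{\phi}$ is a composition of bounded bijections, so it is itself a bounded bijection, and by the bounded inverse theorem $T_{f_{k}}^{-1}$ is bounded. The identical argument, applied to $g_{k}=U_{2}\psi_{k}$, shows that $T_{g_{k}}$ is a bounded bijection of $\ell^{2}(\mathbb{N})$ onto $H$; passing to adjoints, $T^{*}_{g_{k}}$ is then a bounded bijection of $H$ onto $\ell^{2}(\mathbb{N})$, with bounded inverse $(T^{*}_{g_{k}})^{-1}=(T_{g_{k}}^{-1})^{*}$.

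Finally, $G=T^{*}_{g_{k}}T_{f_{k}}$ is a composition of two bounded invertible operators, hence a bounded invertible operator on $\ell^{2}(\mathbb{N})$, with $G^{-1}=T_{f_{k}}^{-1}(T^{*}_{g_{k}})^{-1}$. I expect no real obstacle here; the only point needing a little care is the bijectivity of the analysis operator $T^{*}_{g_{k}}$, which one obtains by first establishing bijectivity of the synthesis operator $T_{g_{k}}$ of the Riesz basis $\{g_{k}\}_{k=1}^{\infty}$ and then dualizing. (One could equally well invoke Theorem~\ref{33333} to handle the Gram matrices of $\{f_{k}\}_{k=1}^{\infty}$ and $\{g_{k}\}_{k=1}^{\infty}$ separately, but the direct factorization of $G$ above is cleaner.)
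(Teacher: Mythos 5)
Your proof is correct and follows essentially the same route as the paper: both arguments rest on writing $f_{k}=U_{1}\phi_{k}$, $g_{k}=U_{2}\psi_{k}$ for bounded bijections, which in the paper appears as the identification of the cross-Gram matrix with the matrix of $W^{*}U$ in an orthonormal basis and in your version as the factorization $G=T^{*}_{g_{k}}T_{f_{k}}$ with both factors bounded bijections. Your write-up additionally makes the completeness claim and the invertibility of the adjoint explicit, which the paper leaves implicit, but the underlying idea is identical.
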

\begin{proof}
Since $\{f_{k}\}_{k=1}^{\infty}$ and $\{g_{k}\}_{k=1}^{\infty}$ are
Riesz bases, there exist bijective operators $U$ and $W$ such that
$\{f_{k}\}_{k=1}^{\infty}=\{Ue_{k}\}_{k=1}^{\infty}$ and
$\{g_{k}\}_{k=1}^{\infty}=\{We_{k}\}_{k=1}^{\infty}$, where
$\{e_{k}\}_{k=1}^{\infty}$ is an orthonormal basis of $H$. For every
$k, j\in\mathbb{N}$ we have
$$\langle f_{k}, g_{j}\rangle=\langle Ue_{k}, We_{j}\rangle=\langle
W^{*}Ue_{k}, e_{j}\rangle.$$ i. e., the cross-Gram matrix associated
to $\{f_{k}\}_{k=1}^{\infty}$ and $\{g_{k}\}_{k=1}^{\infty}$
representing the bounded invertible operator $W^{*}U$ in the basis
$\{e_{k}\}_{k=1}^{\infty}$.
\end{proof}
If the cross-Gram matrix associated to the sequences
$\{f_{k}\}_{k=1}^{\infty}$ and $\{g_{k}\}_{k=1}^{\infty}$ is
invertible and the sequence $\{f_{k}\}_{k=1}^{\infty}$ is a Bessel
sequence, then there is no need to sequence
$\{g_{k}\}_{k=1}^{\infty}$ to be a Bessel sequence, see Example
\ref{ka}. Having in mind this result, now what can we say about the
inverse of Theorem \ref{yani}? By having the assumption of the
invertibility of the cross-Gram matrix associated to
$\{f_{k}\}_{k=1}^{\infty}$ and $\{g_{k}\}_{k=1}^{\infty}$ and
completeness of the sequences $\{f_{k}\}_{k=1}^{\infty}$ and
$\{g_{k}\}_{k=1}^{\infty}$, by Example \ref{ka}, we deduce that
there is no need to sequences $\{f_{k}\}_{k=1}^{\infty}$ and
$\{g_{k}\}_{k=1}^{\infty}$ to be Riesz bases. But what can we say in
the case that $\{f_{k}\}_{k=1}^{\infty}$ and
$\{g_{k}\}_{k=1}^{\infty}$ are frames? In the following theorems we
answer to this question by considering the assumption of being frame
of both sequences.
\begin{thm}
Suppose that $\{f_{k}\}_{k=1}^{\infty}$ and
$\{g_{k}\}_{k=1}^{\infty}$ are frames for $H$ and the cross-Gram
matrix associated to these sequences is bounded and invertible. Then
$\{f_{k}\}_{k=1}^{\infty}$ and $\{g_{k}\}_{k=1}^{\infty}$ are Riesz
bases for $H$.
\end{thm}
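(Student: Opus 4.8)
The plan is to exploit the factorization $G = T^*_{g_k} T_{f_k}$ of the cross-Gram operator together with the structural information that each analysis/synthesis operator carries when the underlying sequence is a frame. Write $T = T_{f_k}$ for the synthesis operator of $\{f_k\}$ and $U = T_{g_k}$ for the synthesis operator of $\{g_k\}$, so that $G = U^* T$ as bounded operators on $\ell^2(\mathbb{N})$. Since both sequences are frames, $T$ and $U$ are bounded and surjective onto $H$, and $T^*, U^*$ are bounded and bounded below (injective with closed range). The hypothesis is that $G = U^* T$ is invertible on $\ell^2(\mathbb{N})$.

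First I would extract injectivity of $T$. If $Tc = 0$ then $Gc = U^* T c = 0$, so $c = 0$ by invertibility of $G$; hence $T$ is injective. Being a surjective and injective bounded operator between Hilbert spaces, $T$ is invertible, and by the characterization of Riesz bases (an invertible synthesis operator means $\{f_k\} = \{T e_k\}$ with $T$ bounded bijective, cf. the definition of Riesz basis in the excerpt), $\{f_k\}$ is a Riesz basis. Symmetrically, I would extract surjectivity of $U^*$: given $d \in \ell^2(\mathbb{N})$, pick $c$ with $Gc = d$; then $U^*(Tc) = d$, so $U^*$ is onto $\ell^2(\mathbb{N})$. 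Combined with the fact that $U^*$ is always bounded below when $\{g_k\}$ is a frame (the lower frame inequality says exactly $\|U^* f\| \ge \sqrt{A'}\,\|f\|$), $U^*$ is a bounded bijection of $H$ onto $\ell^2(\mathbb{N})$, hence so is $U$, and therefore $\{g_k\} = \{U e_k\}$ is a Riesz basis for $H$.

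An alternative route, which I would mention as a cleaner packaging, is to invoke Theorem \ref{33333}: a complete sequence is a Riesz basis iff its Gram matrix is bounded and invertible. The Gram matrix of $\{f_k\}$ is $T^* T$; from $G = U^* T$ invertible and $U^*$ bounded below one checks that $T$ is injective with closed range, and since $\{f_k\}$ is a frame $T$ is surjective, so $T$ is invertible and $T^* T$ is invertible; completeness of a frame is automatic, so Theorem \ref{33333} applies. The same argument with the roles interchanged, using $G^* = T^* U$, handles $\{g_k\}$.

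The only subtlety—and the step I would be most careful about—is the direction of the operators and which one is "automatically" bounded below: for a frame $\{g_k\}$ it is the \emph{analysis} operator $U^*$ (equivalently $T^*_{g_k}$) that is bounded below, not the synthesis operator, so the surjectivity of $G$ must be used to upgrade $U^*$ from merely bounded-below to bijective, and dually the injectivity of $G$ upgrades the surjective synthesis map $T$ to a bijection. Once the bookkeeping of adjoints is set up correctly, the rest is the routine bounded-inverse theorem plus the definition of Riesz basis, so I would not belabor it.
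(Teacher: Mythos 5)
Your proposal is correct and follows essentially the same route as the paper: factor $G=T^{*}_{g_k}T_{f_k}$, use invertibility of $G$ to upgrade the surjective synthesis operator of $\{f_k\}$ to a bijection (injectivity from $Gc=0\Rightarrow c=0$), and use surjectivity of $G$ to show the analysis operator of $\{g_k\}$ is onto, hence bijective. The only cosmetic difference is that you finish the $\{g_k\}$ half by combining surjectivity of $U^{*}$ with the lower frame inequality, whereas the paper invokes $N(T_{g_k})=R(T^{*}_{g_k})^{\perp}$; these are the same fact in dual form.
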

\begin{proof}
Suppose that $G$ is the cross-Gram operator associated to $\{\langle
f_{k}, g_{j}\rangle\}_{j, k=1}^{\infty}$. So we have
$$G=T^{*}_{g_{k}}T_{f_{k}}.$$
Since $\{f_{k}\}_{k=1}^{\infty}$ and $\{g_{k}\}_{k=1}^{\infty}$ are
frames for $H$, $T_{f_{k}}$ and $T_{g_{k}}$ are bounded and
surjective operators. Now we want to show that $T_{f_{k}}$ is an
injective operator. For the given sequences
$\{c_{k}\}_{k=1}^{\infty}$, $\{b_{k}\}_{k=1}^{\infty}\in
\ell^{2}(\mathbb{N})$, suppose that
$$T_{f_{k}}\{c_{k}\}_{k=1}^{\infty}=T_{f_{k}}\{b_{k}\}_{k=1}^{\infty}.$$
Then we have
$$T^{*}_{g_{k}}T_{f_{k}}\{c_{k}\}_{k=1}^{\infty}=T^{*}_{g_{k}}T_{f_{k}}\{b_{k}\}_{k=1}^{\infty}.$$
So
$$G\{c_{k}\}_{k=1}^{\infty}=G\{b_{k}\}_{k=1}^{\infty}.$$
Since $G$ is an invertible operator, we deduce that
$\{c_{k}\}_{k=1}^{\infty}=\{b_{k}\}_{k=1}^{\infty}$ and therefore
$T_{f_{k}}$ is an injective operator and so
$\{f_{k}\}_{k=1}^{\infty}$ is a Riesz basis for $H$.
\\Now we want to show that $T_{g_{k}}$ is also a bijective operator.
Since $N(T_{g_{k}})=R(T^{*}_{g_{k}})^{\perp}$, it is enough to show
that $T^{*}_{g_{k}}: H\rightarrow\ell^{2}(\mathbb{N})$ is a
surjective operator. Since $G$ is invertible, for a given sequence
$\{c_{k}\}_{k=1}^{\infty}\in\ell^{2}(\mathbb{N})$ there exists a
sequence $\{b_{k}\}_{k=1}^{\infty}\in\ell^{2}(\mathbb{N})$ such that
$$G\{b_{k}\}_{k=1}^{\infty}=\{c_{k}\}_{k=1}^{\infty}.$$
So
$$T^{*}_{g_{k}}T_{f_{k}}\{b_{k}\}_{k=1}^{\infty}=\{c_{k}\}_{k=1}^{\infty},$$
which shows that $T^{*}_{g_{k}}$ is a surjective operator. Therefore
$\{g_{k}\}_{k=1}^{\infty}$ is a Riesz basis for $H$.
\end{proof}
\begin{cor}
Suppose that $\{f_{k}\}_{k=1}^{\infty}$ and
$\{g_{k}\}_{k=1}^{\infty}$ are frames but not Riesz bases, then $G$
can not be invertible.
\end{cor}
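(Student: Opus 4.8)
The plan is to deduce this directly from the preceding theorem by contraposition. First I would note that the boundedness hypothesis in that theorem is automatic here: since $\{f_{k}\}_{k=1}^{\infty}$ and $\{g_{k}\}_{k=1}^{\infty}$ are frames, they are in particular Bessel sequences, hence (as observed in Section 2) the cross-Gram operator $G=T^{*}_{g_{k}}T_{f_{k}}$ is well-defined and bounded on $\ell^{2}(\mathbb{N})$, being a composition of the bounded operators $T_{f_{k}}$ and $T^{*}_{g_{k}}$.

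Next, suppose toward a contradiction that $G$ is invertible. Then $\{f_{k}\}_{k=1}^{\infty}$ and $\{g_{k}\}_{k=1}^{\infty}$ are frames for $H$ whose associated cross-Gram matrix is bounded and invertible, so the preceding theorem applies and forces both $\{f_{k}\}_{k=1}^{\infty}$ and $\{g_{k}\}_{k=1}^{\infty}$ to be Riesz bases for $H$. This contradicts the standing assumption that neither sequence is a Riesz basis. Hence $G$ cannot be invertible.

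There is essentially no obstacle here: the corollary is a reformulation of the previous theorem, and the only point worth spelling out is that the clause ``bounded and invertible'' in the hypothesis of that theorem reduces, under the frame assumption, simply to ``invertible''. One could alternatively state the whole result as a biconditional: if $\{f_{k}\}_{k=1}^{\infty}$ and $\{g_{k}\}_{k=1}^{\infty}$ are frames for $H$, then the associated cross-Gram operator $G$ is invertible if and only if both sequences are Riesz bases for $H$.
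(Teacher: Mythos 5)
Your argument is correct and is exactly how the paper intends the corollary to be read: it is the contrapositive of the immediately preceding theorem, with the boundedness hypothesis automatically satisfied because frames are Bessel sequences. The added remark that, for a pair of frames, invertibility of $G$ is equivalent to both sequences being Riesz bases (combining this theorem with Theorem \ref{yani}) is also correct.
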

\begin{exam}
Suppose that $\{e_{k}\}_{k=1}^{\infty}$ is an orthonormal basis for
$H$. Consider the sequences $\{f_{k}\}_{k=1}^{\infty}=\{e_{1},
e_{1}, e_{2}, e_{3}, e_{4},...\}$ and
$\{g_{k}\}_{k=1}^{\infty}=\{e_{1}, e_{1}, e_{2}, e_{2}, e_{3},
e_{3},...\}$.  A simple calculation shows that these sequences are
frames but not Riesz bases. We get the cross-Gram matrix associated
to sequences $\{f_{k}\}_{k=1}^{\infty}$ and
$\{g_{k}\}_{k=1}^{\infty}$:
\begin{eqnarray*}
G=\left[\begin{array}{lllllll}
1&1&0&0& 0&\cdots \\\\
1&1&0&0& 0 &\cdots \\\\
0&0&1&0& 0&\cdots \\
\\
0&0&1&0&0&\cdots
\\\\
0&0&0&1&0&\cdots\\\\
0&0&0&1&0&\cdots\\\\
\vdots &\vdots &\vdots &\vdots& \vdots & \cdots \\\\
\end{array}
\right],
\end{eqnarray*}
We obtain that $\det({G})=0$ and so $G$ is not invertible.
\end{exam}
\begin{thm}\label{na}
Suppose that $\{f_{k}\}_{k=1}^{\infty}$ and
$\{g_{k}\}_{k=1}^{\infty}$ are Bessel sequences for $H$ and the
cross-Gram matrix, associated to these sequences is bounded and
invertible. Then the following statements are satisfied:
\\(i) If $\{f_{k}\}_{k=1}^{\infty}$ is a Riesz basis for $H$, then $\{g_{k}\}_{k=1}^{\infty}$ is a
Riesz basis for $H$.
%\\(ii) If $\{g_{k}\}$ is a Riesz basis for $H$, the $\{f_{k}\}$ is a
%Riesz basis for $H$.
\\(ii) If $\{f_{k}\}_{k=1}^{\infty}$ is a frame for $H$ and $\{g_{k}\}_{k=1}^{\infty}$ is complete in $H$, then $\{g_{k}\}_{k=1}^{\infty}$ is a
frame for $H$.
%\\(iv) If $\{g_{k}\}$ is a frame for $H$, then $\{f_{k}\}$ is a
%frame for $H$.
\end{thm}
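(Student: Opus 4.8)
The plan is to translate every hypothesis into a statement about the synthesis and analysis operators and then read off the conclusion from the characterisation of Riesz bases in Theorem \ref{33333} (resp. from a lower frame bound). Write $G=T^{*}_{g_{k}}T_{f_{k}}$ for the cross-Gram operator; by hypothesis $G$ is bounded (both sequences are Bessel) and invertible on $\ell^{2}(\mathbb{N})$. Abbreviate the synthesis operators by $T_{f}=T_{f_{k}}$ and $T_{g}=T_{g_{k}}$, and recall, as in the introduction, that the Gram matrix of $\{g_{k}\}_{k=1}^{\infty}$ is the operator $T_{g}^{*}T_{g}$ on $\ell^{2}(\mathbb{N})$.

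For part (i): since $\{f_{k}\}_{k=1}^{\infty}$ is a Riesz basis, $T_{f}\colon\ell^{2}(\mathbb{N})\to H$ is bounded and bijective. From $G=T_{g}^{*}T_{f}$ with both $G$ and $T_{f}$ invertible we obtain $T_{g}^{*}=G\,T_{f}^{-1}$, a composition of bounded invertible operators, hence $T_{g}^{*}$ is invertible; taking adjoints, $T_{g}$ is a bounded bijection of $\ell^{2}(\mathbb{N})$ onto $H$. Consequently $\{g_{k}\}_{k=1}^{\infty}$ is complete (its span is already all of $H$) and its Gram matrix $T_{g}^{*}T_{g}$, being a product of two bounded invertible operators, is bounded and invertible on $\ell^{2}(\mathbb{N})$; Theorem \ref{33333} then yields that $\{g_{k}\}_{k=1}^{\infty}$ is a Riesz basis for $H$.

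For part (ii): now $\{f_{k}\}_{k=1}^{\infty}$ is only a frame, so $T_{f}$ is bounded and surjective but in general not injective; hence one cannot legitimately write $T_{g}^{*}=G\,T_{f}^{-1}$, and circumventing this is the main obstacle. Instead one uses surjectivity alone: since $G=T_{g}^{*}T_{f}$ maps onto $\ell^{2}(\mathbb{N})$ and $R(G)\subseteq R(T_{g}^{*})$, the analysis operator $T_{g}^{*}\colon H\to\ell^{2}(\mathbb{N})$ is surjective. On the other hand $N(T_{g}^{*})=(\overline{\operatorname{span}}\{g_{k}\})^{\perp}=\{0\}$ because $\{g_{k}\}_{k=1}^{\infty}$ is complete, so $T_{g}^{*}$ is a bounded bijection $H\to\ell^{2}(\mathbb{N})$; by the open mapping theorem it is bounded below, i.e. there is $A>0$ with $\sum_{j=1}^{\infty}|\langle f,g_{j}\rangle|^{2}=\|T_{g}^{*}f\|^{2}\ge A\|f\|^{2}$ for every $f\in H$. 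Combining this lower bound with the Bessel bound $B$ of $\{g_{k}\}_{k=1}^{\infty}$ shows that $\{g_{k}\}_{k=1}^{\infty}$ is a frame for $H$.

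The only delicate point is the one flagged in (ii): invertibility of $G$ is a statement about the composite $T_{g}^{*}T_{f}$, and since $T_{f}$ need not be injective one must extract surjectivity of $T_{g}^{*}$ directly from that of $G$ and then invoke completeness of $\{g_{k}\}_{k=1}^{\infty}$ (not injectivity of $T_{f}$) to upgrade to bijectivity before applying the open mapping theorem. Everything else is routine manipulation of bounded invertible operators together with Theorem \ref{33333}.
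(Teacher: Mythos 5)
Your proof is correct. Part (i) is essentially the paper's argument: factor $T^{*}_{g_{k}}=G\,T_{f_{k}}^{-1}$ and conclude that the analysis operator of $\{g_{k}\}_{k=1}^{\infty}$ is invertible; your extra detour through the Gram matrix $T_{g}^{*}T_{g}$ and Theorem \ref{33333} is harmless but not needed, since invertibility of $T_{g}$ already exhibits $\{g_{k}\}_{k=1}^{\infty}$ as the image of an orthonormal basis under a bounded bijection. Part (ii) is where you genuinely diverge, and your route is the tighter one. The paper reduces the claim to injectivity of $T^{*}_{g_{k}}$ and establishes that via surjectivity of $T_{f_{k}}$ and invertibility of $G$; but injectivity of $T^{*}_{g_{k}}$ is equivalent to completeness of $\{g_{k}\}_{k=1}^{\infty}$ (which is already hypothesized) and by itself only yields that $T_{g_{k}}$ has dense range, not that it is surjective -- i.e.\ it does not by itself produce a lower frame bound. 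You instead extract surjectivity of $T^{*}_{g_{k}}$ from $R(G)\subseteq R(T^{*}_{g_{k}})$ together with invertibility of $G$, use completeness only for injectivity, and then apply the open mapping theorem to get $T^{*}_{g_{k}}$ bounded below, which is precisely the lower frame inequality; this supplies the step the paper's proof leaves implicit. Note also that your argument for (ii) never uses that $\{f_{k}\}_{k=1}^{\infty}$ is a frame -- the Bessel hypothesis plus invertibility of $G$ suffice -- so you have in fact proved a slightly stronger statement.
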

\begin{proof}
(i) Suppose that $G$ is the cross-Gram operator associated to
$\{\langle f_{k}, g_{j}\rangle\}_{j, k=1}^{\infty}$. Then we have
$$G=T^{*}_{g_{k}}T_{f_{k}}.$$
Since $\{f_{k}\}_{k=1}^{\infty}$ is a Riesz basis, $T_{f_{k}}$ has a
bounded inverse and we can write
$$T^{*}_{g_{k}}=G(T_{f_{k}}^{-1}).$$
Therefore $T^{*}_{g_{k}}$ is an invertible operator and we deduce
that $\{g_{k}\}_{k=1}^{\infty}$ is a Riesz basis for $H$.
\\(ii) In order to show that $\{g_{k}\}_{k=1}^{\infty}$ is a frame for $H$ it is
enough to prove that $T_{g_{k}}$ is a surjective operator. Since
$\{g_{k}\}_{k=1}^{\infty}$ is complete in $H$, we need to show that
$T^{*}_{g_{k}}$ is injective.
\\Suppose that $$T^{*}_{g_{k}}(f_{1})=T^{*}_{g_{k}}(f_{2}),\ \ f_{1}, f_{2}\in H.$$
Since $T_{f_{k}}$ is a surjective operator, there exist sequences
$\{c_{k}\}_{k=1}^{\infty}$,
$\{b_{k}\}_{k=1}^{\infty}\in\ell^{2}(\mathbb{N})$ such that
$f_{1}=T_{f_{k}}\{c_{k}\}_{k=1}^{\infty},\ \
f_{2}=T_{f_{k}}\{b_{k}\}_{k=1}^{\infty},$ and therefore we can write
$$T^{*}_{g_{k}}T_{f_{k}}\{c_{k}\}_{k=1}^{\infty}=T^{*}_{g_{k}}T_{f_{k}}\{b_{k}\}_{k=1}^{\infty}.$$
Now by the invertibility of $G$ we deduce that
$\{c_{k}\}_{k=1}^{\infty}=\{b_{k}\}_{k=1}^{\infty},$ and so
$T_{f_{k}}\{c_{k}\}_{k=1}^{\infty}=T_{f_{k}}\{b_{k}\}_{k=1}^{\infty}.$
Hence we get the proof.
\end{proof}
By changing the role of the sequences $\{f_{k}\}_{k=1}^{\infty}$ and
$\{g_{k}\}_{k=1}^{\infty}$ in above theorem we deduce the same
results.
\begin{cor}
Suppose that $\{f_{k}\}_{k=1}^{\infty}$ is a Riesz basis and
$\{g_{k}\}_{k=1}^{\infty}$ is a Bessel sequence. Then the following
statements are satisfied:
\\(i) If $\{g_{k}\}_{k=1}^{\infty}$ is not a frame, then $G$ can not be invertible.
\\(ii) If $\{g_{k}\}_{k=1}^{\infty}$ is not NBA or NBB, then $G$ can not be
invertible.
\end{cor}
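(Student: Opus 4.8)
The plan is to obtain both parts as contrapositives of Theorem~\ref{na}(i). That theorem asserts: if $\{f_{k}\}_{k=1}^{\infty}$ and $\{g_{k}\}_{k=1}^{\infty}$ are Bessel sequences whose cross-Gram matrix $G$ is bounded and invertible, and $\{f_{k}\}_{k=1}^{\infty}$ is a Riesz basis, then $\{g_{k}\}_{k=1}^{\infty}$ is a Riesz basis. In the present situation $\{f_{k}\}_{k=1}^{\infty}$ is a Riesz basis, hence a Bessel sequence, and $\{g_{k}\}_{k=1}^{\infty}$ is Bessel by hypothesis; by the discussion following the definition of the cross-Gram matrix this already forces $G$ to be bounded. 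Thus the only hypothesis of Theorem~\ref{na}(i) that can possibly fail is the invertibility of $G$, and we record the key implication: if $\{g_{k}\}_{k=1}^{\infty}$ is not a Riesz basis, then $G$ is not invertible.

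For (i), I would first recall that every Riesz basis is a frame (immediate from the definition $g_{k}=Ue_{k}$ with $U$ bounded and bijective, or from Theorem~\ref{33333}). Hence if $\{g_{k}\}_{k=1}^{\infty}$ is not a frame it is a fortiori not a Riesz basis, and the key implication above gives that $G$ cannot be invertible.

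For (ii), I would invoke Proposition~\ref{ch}, which shows that any Riesz basis $\{g_{k}\}_{k=1}^{\infty}$ satisfies $0<\inf_{k}\|g_{k}\|\leq\sup_{k}\|g_{k}\|<\infty$, i.e.\ it is both NBA and NBB. Consequently, if $\{g_{k}\}_{k=1}^{\infty}$ fails to be NBA or fails to be NBB it is not a Riesz basis, and again the key implication yields that $G$ is not invertible. (Alternatively, assuming $G$ invertible, one could note that the cross-Gram operator of $\{\langle g_{k},f_{j}\rangle\}_{j,k=1}^{\infty}$ is $G^{*}=T^{*}_{f_{k}}T_{g_{k}}$, which is then bounded and bounded below, and apply the role-swapped versions of the two norm estimates proved earlier in this section, using that the Riesz basis $\{f_{k}\}_{k=1}^{\infty}$ is in particular a frame and a Bessel sequence.)

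Since the whole argument is a logical contraposition, there is no substantive obstacle; the only points needing care are checking that boundedness of $G$ is automatic here — so that the real content of Theorem~\ref{na}(i) is ``$G$ invertible $\Rightarrow$ $\{g_{k}\}_{k=1}^{\infty}$ a Riesz basis'' — and verifying that ``not a frame'' and ``not NBA or not NBB'' each genuinely imply ``not a Riesz basis,'' the latter through Proposition~\ref{ch}.
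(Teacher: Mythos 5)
Your proposal is correct and is essentially the argument the paper intends: the corollary is stated without proof immediately after Theorem~\ref{na}, and the intended derivation is exactly your contraposition of Theorem~\ref{na}(i), using that both sequences being Bessel makes $G$ automatically bounded, that every Riesz basis is a frame, and that Proposition~\ref{ch} makes every Riesz basis both NBA and NBB.
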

%\begin{proof}
%(i) Suppose that $G$ is invertible. Then by Theorem \ref{na}(i),
%$\{g_{k}\}$ is a Riesz basis which is impossible.
%\\(ii) we get the proof by (i) and Proposition \ref{ch}.
%\end{proof}
\section{Dual frames associated to cross-Gram matrix}
In this section, we investigate the cases when
$\{f_{k}\}_{k=1}^{\infty}$ and $\{g_{k}\}_{k=1}^{\infty}$ are a pair
of dual frames. Recall that for the Bessel sequences
$\{f_{k}\}_{k=1}^{\infty}$ and $\{g_{k}\}_{k=1}^{\infty}$, the pair
$(\{f_{k}\}_{k=1}^{\infty}, \{g_{k}\}_{k=1}^{\infty})$ is a dual
pair if for any $f\in H$ one of the following equivalent conditions
holds:
\\(1) $f=\sum_{k=1}^{\infty}\langle f, g_{k}\rangle f_{k}, f\in H.$
\\(2) $f=\sum_{k=1}^{\infty}\langle f, f_{k}\rangle g_{k}, f\in H.$
\\(3) $\langle f, g\rangle=\sum_{k=1}^{\infty}\langle f,
f_{k}\rangle\langle g_{k}, g\rangle,\ \ f, g\in H.$
\begin{thm}
Suppose that $\{f_{k}\}_{k=1}^{\infty}$ and
$\{g_{k}\}_{k=1}^{\infty}$ are Bessel sequences and $\langle f,
g_{k}\rangle\neq 0$ for each $k\in\mathbb{N}$ and $f\in H$. Assume
that $G$, the cross Gram operator associated to $\{\langle f_{k},
g_{j}\rangle\}_{j, k=1}^{\infty}$, is a well-defined and bounded
operator with bound $M$ such that $0<M<1$. Then
$\{f_{k}\}_{k=1}^{\infty}$ and $\{g_{k}\}_{k=1}^{\infty}$ can not be
a pair of dual frames.
\end{thm}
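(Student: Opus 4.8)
The plan is to argue by contradiction. Suppose that $(\{f_{k}\}_{k=1}^{\infty},\{g_{k}\}_{k=1}^{\infty})$ \emph{is} a pair of dual frames; in particular both sequences are frames, so $T_{f_{k}}$ and $T_{g_{k}}$ are bounded and surjective. The first step is to rewrite the dual-pair identity (1) in operator form: the equality $f=\sum_{k=1}^{\infty}\langle f,g_{k}\rangle f_{k}$ for all $f\in H$ says exactly that $T_{f_{k}}T^{*}_{g_{k}}=I_{H}$, the identity operator on $H$.

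The second step is the computation, using $G=T^{*}_{g_{k}}T_{f_{k}}$,
$$G^{2}=T^{*}_{g_{k}}T_{f_{k}}T^{*}_{g_{k}}T_{f_{k}}=T^{*}_{g_{k}}\bigl(T_{f_{k}}T^{*}_{g_{k}}\bigr)T_{f_{k}}=T^{*}_{g_{k}}T_{f_{k}}=G,$$
so $G$ is a bounded idempotent operator on $\ell^{2}(\mathbb{N})$. The third step is the elementary remark that a bounded idempotent is either the zero operator or has norm at least $1$: if $Gx\neq 0$ for some $x$, then $G(Gx)=G^{2}x=Gx$, whence $\|G\|\geq\|G(Gx)\|/\|Gx\|=1$. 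Since by hypothesis $G$ is bounded with bound $M<1$, this forces $G=0$.

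Finally, from $G=0$ I would reach a contradiction. Because $\{f_{k}\}_{k=1}^{\infty}$ is a frame, $T_{f_{k}}$ maps $\ell^{2}(\mathbb{N})$ onto $H$; the identity $T^{*}_{g_{k}}T_{f_{k}}=G=0$ then shows that $T^{*}_{g_{k}}$ vanishes on $R(T_{f_{k}})=H$, so $T^{*}_{g_{k}}=0$, i.e. $g_{k}=0$ for every $k\in\mathbb{N}$. But then $\langle f,g_{k}\rangle=0$ for all $k$ and all $f\in H$, contradicting the standing hypothesis that $\langle f,g_{k}\rangle\neq 0$. Hence $\{f_{k}\}_{k=1}^{\infty}$ and $\{g_{k}\}_{k=1}^{\infty}$ cannot form a pair of dual frames.

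The heart of the proof is the second and third steps: recognizing that the dual-pair relation forces $G$ to be idempotent, and then invoking the fact that a nonzero idempotent never has norm below $1$. The non-vanishing hypothesis on $\langle f,g_{k}\rangle$ plays only the closing role of turning the degenerate conclusion $G=0$ into an outright contradiction; I expect the small technical point to check carefully is merely that ``dual pair'' here indeed means both are frames, so that the surjectivity of $T_{f_{k}}$ used in the last step is legitimate.
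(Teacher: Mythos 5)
Your proof is correct, and it packages the argument differently from the paper's. The paper works at the level of coefficient sequences: it plugs $c_{k}=\langle f,g_{k}\rangle$ into the boundedness inequality $\sum_{j}|\sum_{k}c_{k}\langle f_{k},g_{j}\rangle|^{2}\leq M\sum_{k}|c_{k}|^{2}$ and uses the reconstruction formula $f=\sum_{k}\langle f,g_{k}\rangle f_{k}$ to obtain $\sum_{j}|\langle f,g_{j}\rangle|^{2}\leq M\sum_{k}|\langle f,g_{k}\rangle|^{2}$, which with $M<1$ forces $\langle f,g_{k}\rangle=0$ for all $k$, contradicting the non-vanishing hypothesis. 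You instead promote the duality relation to the operator identity $T_{f_{k}}T^{*}_{g_{k}}=I_{H}$, deduce $G^{2}=G$, and invoke the fact that a nonzero bounded idempotent has norm at least $1$. The two proofs rest on the same mechanism --- duality makes $G$ act as the identity on the range of $T^{*}_{g_{k}}$, and the paper's displayed inequality is exactly your idempotency applied to the vector $T^{*}_{g_{k}}f$ --- but your version is more structural: it isolates cleanly that the norm hypothesis alone forces $G=0$, and shows that the hypothesis $\langle f,g_{k}\rangle\neq 0$ is needed only to exclude this degenerate case (indeed $G=0$ already contradicts $\{g_{k}\}$ being a frame, so that hypothesis is not really essential). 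One cosmetic caveat: the paper's ``bound $M$'' appears in its proof as $\|Gc\|^{2}\leq M\|c\|^{2}$, i.e.\ a bound on $\|G\|^{2}$ rather than on $\|G\|$; since $0<M<1$ gives $\|G\|<1$ under either reading, your idempotent argument is unaffected.
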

\begin{proof}
Suppose that $\{f_{k}\}_{k=1}^{\infty}$ is a dual frame of
$\{g_{k}\}_{k=1}^{\infty}$. Then for every $f\in H$,
\begin{equation}\label{at}
f=\sum_{k=1}^{\infty}\langle f, g_{k}\rangle f_{k}.
\end{equation}
Since $G$ is a bounded operator with bound $M$, for
$\{c_{k}\}_{k=1}^{\infty}\in\ell^{2}(\mathbb{N})$ we have
\begin{equation}\label{ali}
\sum_{j=1}^{\infty}|\sum_{k=1}^{\infty}c_{k}\langle f_{k},
g_{j}\rangle|^{2}\leq M\sum_{k=1}^{\infty}|c_{k}|^{2},
\end{equation}
Now for each $f\in H$, by (\ref{at}) and (\ref{ali}) we have
\begin{eqnarray*}
\sum_{j=1}^{\infty}|\langle f,
g_{j}\rangle|^{2}&=&\sum_{j=1}^{\infty}|\langle\sum_{k=1}^{\infty}\langle
f, g_{k}\rangle f_{k},
g_{j}\rangle|^{2}\\&=&\sum_{j=1}^{\infty}|\sum_{k=1}^{\infty}\langle
f, g_{k}\rangle\langle f_{k}, g_{j}\rangle|^{2}\leq
M\sum_{k=1}^{\infty}|\langle f, g_{k}\rangle|^{2},
\end{eqnarray*}
which is a contradiction. Therefore we get the proof.
\end{proof}
\begin{exam}
Let $\{e_{k}\}_{k=1}^{\infty}$ be an orthonormal basis for $H$.
Consider $\{f_{k}\}_{k=1}^{\infty}=\{e_{1}, e_{2},
e_{3},e_{4},...\}$ and $\{g_{k}\}_{k=1}^{\infty}=\{\frac{1}{2}e_{1},
\frac{1}{2}e_{1}, \frac{1}{3}e_{1}, \frac{1}{4}e_{1},...\}$. Suppose
that $G$ is the cross-Gram operator associated to $\{\langle f_{k},
g_{j}\rangle\}_{j, k=1}^{\infty}$. Then $G$ is a well-defined and
bounded operator with bound $\sqrt{\frac{89}{100}}$. Suppose that
$\{f_{k}\}_{k=1}^{\infty}$ and $\{g_{k}\}_{k=1}^{\infty}$ be a pair
of dual frames. Then we have
%$$e_{2}=e_{1}[\frac{1}{2}\langle e_{2}, e_{1}\rangle+\frac{1}{2}\langle
%e_{2}, e_{2}\rangle+\frac{1}{3}\langle e_{2}, e_{3}\rangle+...].$$
$e_{2}=\frac{1}{2}e_{1},$ which is a contradiction. Therefore
$\{f_{k}\}_{k=1}^{\infty}$ and $\{g_{k}\}_{k=1}^{\infty}$ can not be
a dual pair.
\end{exam}
\begin{thm}
Suppose that $\{f_{k}\}_{k=1}^{\infty}$ is a frame for $H$ and
$\{S^{-1}f_{k}\}_{k=1}^{\infty}$ is its canonical dual frame. Assume
that $G$ is the cross-Gram operator associated to $\{\langle f_{k},
S^{-1}f_{j}\rangle\}_{j, k=1}^{\infty}$. Then $G$ is self-adjoint
and positive operator.
\end{thm}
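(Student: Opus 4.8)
The plan is to exhibit $G$ in the factored form $A^{*}PA$ with $P$ a bounded, self-adjoint, positive operator; such a form yields self-adjointness and positivity at once.

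First I would record the elementary identity relating the synthesis operator of the canonical dual $\{S^{-1}f_{k}\}_{k=1}^{\infty}$ to that of $\{f_{k}\}_{k=1}^{\infty}$. For $\{c_{k}\}_{k=1}^{\infty}\in\ell^{2}(\mathbb{N})$,
\[
T_{S^{-1}f_{k}}\{c_{k}\}_{k=1}^{\infty}=\sum_{k=1}^{\infty}c_{k}S^{-1}f_{k}=S^{-1}\Big(\sum_{k=1}^{\infty}c_{k}f_{k}\Big)=S^{-1}T_{f_{k}}\{c_{k}\}_{k=1}^{\infty},
\]
so $T_{S^{-1}f_{k}}=S^{-1}T_{f_{k}}$ as bounded operators on $\ell^{2}(\mathbb{N})$. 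Here I use that the frame operator $S$ is bounded, self-adjoint and positive, since $\langle Sf,f\rangle=\sum_{k=1}^{\infty}|\langle f,f_{k}\rangle|^{2}\geq A\|f\|^{2}$; consequently $S^{-1}$ is bounded, self-adjoint (as $(S^{-1})^{*}=(S^{*})^{-1}=S^{-1}$) and positive, because $\langle S^{-1}f,f\rangle=\langle S^{-1}f,S(S^{-1}f)\rangle\geq 0$. Taking adjoints in the displayed identity then gives $T^{*}_{S^{-1}f_{k}}=T^{*}_{f_{k}}S^{-1}$.

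Next I would substitute into the definition of the cross-Gram operator. By construction $G=T^{*}_{S^{-1}f_{k}}T_{f_{k}}$, hence
\[
G=T^{*}_{f_{k}}\,S^{-1}\,T_{f_{k}}.
\]
Self-adjointness is immediate: $G^{*}=T^{*}_{f_{k}}(S^{-1})^{*}(T^{*}_{f_{k}})^{*}=T^{*}_{f_{k}}S^{-1}T_{f_{k}}=G$. For positivity, for any $\{c_{k}\}_{k=1}^{\infty}\in\ell^{2}(\mathbb{N})$,
\[
\langle G\{c_{k}\}_{k=1}^{\infty},\{c_{k}\}_{k=1}^{\infty}\rangle=\langle S^{-1}T_{f_{k}}\{c_{k}\}_{k=1}^{\infty},\,T_{f_{k}}\{c_{k}\}_{k=1}^{\infty}\rangle\geq 0,
\]
the inequality being exactly the positivity of $S^{-1}$.

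The only step with any content is recognizing the factorization $G=T^{*}_{f_{k}}S^{-1}T_{f_{k}}$; once that is in hand, the conclusion is standard Hilbert-space operator theory. The identity $T_{S^{-1}f_{k}}=S^{-1}T_{f_{k}}$ is a one-line verification, and the point deserving a word of care is that $S^{-1}$ inherits positivity from $S$, which I would justify via $\langle S^{-1}f,f\rangle=\langle S^{-1}f,S(S^{-1}f)\rangle\geq0$ (equivalently, by the spectral theorem). I do not anticipate a genuine obstacle here.
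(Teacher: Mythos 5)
Your proposal is correct and follows essentially the same route as the paper: both factor $G=T^{*}_{S^{-1}f_{k}}T_{f_{k}}=T^{*}_{f_{k}}S^{-1}T_{f_{k}}$ and read off self-adjointness and positivity from this form, the only cosmetic difference being that the paper expresses $\langle S^{-1}h,h\rangle$ as $\sum_{k}|\langle h,S^{-1}f_{k}\rangle|^{2}$ (using that $S^{-1}$ is the frame operator of the canonical dual) while you invoke the positivity of $S^{-1}$ directly. Your explicit verification of $T_{S^{-1}f_{k}}=S^{-1}T_{f_{k}}$ is a welcome detail that the paper leaves implicit.
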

\begin{proof}
Since $G$ is the cross-Gram operator associated to $\{\langle f_{k},
S^{-1}f_{j}\rangle\}_{j, k=1}^{\infty}$, we have
$$G=T^{*}_{s^{-1}f_{k}}T_{f_{k}}=T^{*}_{f_{k}}S^{-1}T_{f_{k}}.$$
Therefore $G$ is self-adjoint. Now we show that $G$ is a positive
operator. For $\{c_{k}\}_{k=1}^{\infty}\in\ell^{2}(\mathbb{N})$ we
have
\begin{eqnarray*}
\langle G\{c_{k}\}_{k=1}^{\infty},
\{c_{k}\}_{k=1}^{\infty}\rangle&=&\langle
T^{*}_{f_{k}}S^{-1}T_{f_{k}}\{c_{k}\}_{k=1}^{\infty},
\{c_{k}\}_{k=1}^{\infty}\rangle\\&=&\langle
S^{-1}T_{f_{k}}\{c_{k}\}_{k=1}^{\infty},
T_{f_{k}}\{c_{k}\}_{k=1}^{\infty}\rangle\\&=&\sum_{k=1}^{\infty}|\langle
T_{f_{k}}\{c_{k}\}_{k=1}^{\infty}, S^{-1}f_{k}\rangle|^{2}.
\end{eqnarray*}
\end{proof}
\begin{cor}
If $\{f_{k}\}_{k=1}^{\infty}$ is a Riesz basis in above theorem,
then $G$ is the identity operator.
\end{cor}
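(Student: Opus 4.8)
The plan is to exploit the factorization $G=T^{*}_{f_{k}}S^{-1}T_{f_{k}}$ obtained in the proof of the preceding theorem, combined with the fact that a Riesz basis makes the synthesis operator itself invertible. First I would record that if $\{f_{k}\}_{k=1}^{\infty}=\{Ue_{k}\}_{k=1}^{\infty}$ is a Riesz basis, where $\{e_{k}\}_{k=1}^{\infty}$ is an orthonormal basis of $H$ and $U$ is bounded and bijective, then for $\{c_{k}\}_{k=1}^{\infty}\in\ell^{2}(\mathbb{N})$ we have $T_{f_{k}}\{c_{k}\}_{k=1}^{\infty}=\sum_{k=1}^{\infty}c_{k}Ue_{k}=U\big(\sum_{k=1}^{\infty}c_{k}e_{k}\big)$, so $T_{f_{k}}$ is $U$ composed with the canonical unitary $\ell^{2}(\mathbb{N})\to H$ determined by $\{e_{k}\}$. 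Hence $T_{f_{k}}$ is a bounded bijection, and therefore so is its adjoint $T^{*}_{f_{k}}$.

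Next I would compute $S^{-1}$ from $S=T_{f_{k}}T^{*}_{f_{k}}$: since both factors are invertible, $S^{-1}=(T^{*}_{f_{k}})^{-1}(T_{f_{k}})^{-1}$. Plugging this into the factorization yields
$$G=T^{*}_{f_{k}}S^{-1}T_{f_{k}}=T^{*}_{f_{k}}\,(T^{*}_{f_{k}})^{-1}\,(T_{f_{k}})^{-1}\,T_{f_{k}}=I,$$
the identity operator on $\ell^{2}(\mathbb{N})$, which is the claim.

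An alternative, essentially equivalent, route is to invoke biorthogonality: a Riesz basis has a unique biorthogonal system in $H$, and that system coincides with the canonical dual $\{S^{-1}f_{k}\}_{k=1}^{\infty}$, so $\langle f_{k},S^{-1}f_{j}\rangle=\delta_{jk}$ for all $j,k$. Then the cross-Gram matrix $\{\langle f_{k},S^{-1}f_{j}\rangle\}_{j,k=1}^{\infty}$ is the identity matrix, exactly as noted in Section 2 for biorthogonal pairs.

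I do not expect a genuine obstacle here. The only points needing a modicum of care are the order of the inverses in $S^{-1}=(T^{*}_{f_{k}})^{-1}(T_{f_{k}})^{-1}$, and the observation that being a Riesz basis --- rather than merely a frame --- is what upgrades $T_{f_{k}}$ from surjective to bijective; this is immediate from the definition of Riesz basis used in the paper, since the operator $U$ there is assumed bijective.
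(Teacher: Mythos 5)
Your proof is correct; the paper states this corollary without proof, and your argument supplies exactly the intended one, using the factorization $G=T^{*}_{f_{k}}S^{-1}T_{f_{k}}$ from the preceding theorem together with the invertibility of $T_{f_{k}}$ for a Riesz basis. The biorthogonality route you mention is equally valid and matches the paper's earlier remark that biorthogonal pairs yield the identity cross-Gram matrix.
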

\begin{exam}
Let $\{e_{k}\}_{k=1}^{\infty}$ be an orthonormal basis for $H$.
Consider $$\{f_{k}\}_{k=1}^{\infty}=\{e_{1}, e_{1}, e_{2},
e_{3},...\}.$$ The canonical dual frame is given by
$$\{S^{-1}f_{k}\}_{k=1}^{\infty}=\{\frac{1}{2}e_{1},
\frac{1}{2}e_{1}, e_{2}, e_{3},...\}.$$ The cross-Gram matrix
associated to these sequences is as follows:
\begin{eqnarray*}
G=\left[\begin{array}{llllll}
\frac{1}{2}&\frac{1}{2}&0&0 &0 & \cdots\\\\
\frac{1}{2}&\frac{1}{2}&0&0 & 0&\cdots \\\\
0&0&1&0& 0& \cdots\\
\\
0&0&0&1&0&\cdots\\
\\
\vdots& \vdots & \vdots&\vdots & \vdots & \cdots \\\\
\end{array}
\right],
\end{eqnarray*}
Then
\begin{eqnarray*}
\langle G\{c_{k}\}_{k=1}^{\infty},
\{c_{k}\}_{k=1}^{\infty}\rangle&=&\langle\{\frac{1}{2}c_{1}+\frac{1}{2}c_{2},
\frac{1}{2}c_{1}+\frac{1}{2}c_{2}, c_{3}, c_{4},...\}, \{c_{1},
c_{2}, c_{3},
c_{4},...\}\rangle\\&=&\frac{1}{2}(c_{1}+c_{2})\overline{c_{1}}+\frac{1}{2}(c_{1}+c_{2})\overline{c_{2}}+
\sum_{k=3}^{\infty}|c_{k}|^{2},
\end{eqnarray*}
which shows that $G$ is a positive operator and $G^{2}=G$.
\end{exam}
\bibliographystyle{plain}
%\bibliography{xbib}

\begin{thebibliography}{99}
\bibitem{12}
P. Balazs, Basic definition and properties of Bessel multipliers, J.
Math. Anal. Appl. {\bf 325}, 1 (2007), 571–-585.
\bibitem{B}
P. Balazs, Hilbert-Schmidt operators and frames-classification, best
approximation by multipliers and algorithms, Int. J. Wavelets
Multiresolut. Inf. Process. {\bf 6}, 2 (2008), 315-330.
\bibitem{8}
P. Balazs, B. Laback, G. Eckel, W. A. Deutsch, Time-frequency
sparsity by removing perceptually irrelevant components using a
simple model of simultaneous masking, IEEE Trans. Speech Audio
Process. {\bf 18}, 1 (2010), 34–-49.
\bibitem{9}
A. Benyi, L. Grafakos, K. Gr¨ochenig, K. Okoudjou, A class of
Fourier multipliers for modulation spaces, Appl. Comput. Harmon.
Anal. {\bf 19}, 1 (2005), 131–-139.
\bibitem{7}
O. Christensen, An Introduction to Frames and Riesz Bases,
Birkhauser, Boston, MA. 2003.
\bibitem{10}
M. H. Faroughi, E. Osgooei, A. Rahimi, $(X_{d},
X_{d}^{*})$-multipliers in Banach spaces, Banach J. Math. Anal. {\bf
7} (2013), 146-161.
\bibitem{16}
H. G. Feichtinger, G. Narimani, Fourier multipliers of classical
modulation spaces, Appl. Comput. Harmon. Anal. {\bf 21}, 3 (2006),
349–-359.
\bibitem{22}
G. F. Margrave, P. C. Gibson, J. P. Grossman, D. C. Henley, V.
Iliescu, M. P. Lamoureux, The Gabor transform, pseudodifferential
operators, and seismic deconvolution, Integr. Comput.-Aid. E. {\bf
12}, 1 (2005), 43–-55.
\bibitem{ped}
M. Pedersen, Functional Analysis in Applied Mathematics and
Engineering, CRC press, New York, 1999.
\bibitem{p}
E. Pekalska, R. P. W. Duin, The Dissimilarity Representation for
Pattern Recognition: Foundations and Applications, World Scientific
Publishing Co., Singapore, 2005.
\bibitem{Rudin}
W. Rudin, Functional Analysis, McGraw-Hill, New York, 1991.
\bibitem{11}
D. T. Stoeva, P. Balasz, Unconditional convergence and invertibility
of multipliers, arXiv:0911.2783, 2009.
\bibitem{30}
D. Wang, G. J. Brown, Computational Auditory Scene Analysis: Prin-
ciples, Algorithms, and Applications, Wiley-IEEE Press, 2006.
%\bibitem{6}
%O. Christensen, H. O. Kim, R. Y. Kim, Extensions of Bessel sequences
%to dual pairs of frames, Appl. Comput. Harmon. Anal. {\bf 34}
%(2013), 224-233.
%\bibitem{2}
%O. Christensen, H. O. Kim, R. Y. Kim, Gabor frames with compactly
%supported smooth dual windows, Preprint 2012.
%\bibitem{5}
%O. Christensen, R. Y. Kim, On dual Gabor frame pairs generated by
%polynomials, J. Fourier Anal. Appl. {\bf 16} (2010), 1--16.
%\bibitem{4}
%O. Christensen, E. Osgooei, On frame properties for Fourier-like
%systems, J. Approx. Theory {\bf 172} (2013), 47-57.
%\bibitem{1}
%O. Christensen, W. Sun, Explicity given pairs of dual frames with
%compactly supported generators and applications to irregular
%$B$-splines, J. Approx. Theory {\bf 151} (2008), 155-163.
%\bibitem{3}
%L. H. Christiansen, O. Christensen, Construction of smooth compactly
%supported windows generating dual pairs of Gabor frames,
%Asian-European Journal of Mathematics {\bf 6} (2013), 1-13.
%\bibitem{8}
%I. Daubechies, Ten Lectures on Wavelets, SIAM, Philadelphia, PA,
%1992.
%\bibitem{11}
%M. H. Faroughi, E. Osgooei, A. Rahimi, $(X_{d},
%X_{d}^{*})$-multipliers in Banach spaces, Banach J. Math. Anal. {\bf
%7} (2013), 146-161.
%\bibitem{9}
%A. J. E. M. Janssen, The duality condition for Weyl-Heisenberg
%frames, in: H.G. Feichtinger, T. Strohmer (Eds.), Gabor Analysis:
%Theory and Application, Birkhauser, Boston, MA, 1998.
%\bibitem{10}
%A. Ron, Z. Shen, Frames and stable for shift-invariant subspaces of
%$L^{2}(\mathbb{R^{d}})$, Canad. J. Math. {\bf 47} (1995), 1051-1094.
\end{thebibliography}

\end{document}